\newtheorem{theorem}{Theorem}[section]
\newtheorem{lemma}[theorem]{Lemma}
\newtheorem{proposition}[theorem]{Proposition}
\newtheorem{corollary}[theorem]{Corollary}
\theoremstyle{definition}
\theoremstyle{remark}
\newtheorem{remark}[theorem]{Remark}
\newtheorem{question}[theorem]{Question}
\newtheorem{problem}[theorem]{Problem}
\numberwithin{equation}{section}
 \def\C{\mathbb C}
 \def\P{\mathbb P}
\def\Mod{\mathrm{Mod}}
\def\sC{\mathcal C}
\def\sW{\mathcal W}
\def\sB{\mathcal B}
\def \sA{\mathcal A}
\def\sD{\mathcal D}
\def\sM{\mathcal M}
\def\sT{\mathcal T}
\def\Moduli{\sM}
\def\Teich{\sT}
\def\sV{\mathcal V}
\def\sZ{\mathcal Z}
\def\sL{\mathcal L}
\def\Def{\mathcal{D}}
\def\Per{\mathrm{Per}}
\def\V{\mathcal{V}}
\def\AugV{\mathcal{AV}}
\def\AugTeich{\mathcal{AT}}
\def\AugDef{\mathcal{AD}}
\def\AugModuli{\mathcal{AM}}
\def\Eq{\mathrm{Eq}}
\def\sN{\mathcal N}
\begin{document}
\title{The augmented deformation space of rational maps}

\author{Eriko Hironaka}
\thanks{\small \noindent 
This work was partially supported by a grant from the Simons Foundation \#426722. }
\begin{abstract}  
It was recently shown that the
Epstein deformation space of marked rational maps with prescribed combinatorial and dynamical
structure can be disconnected. For example, 
the family of quadratic rational maps with a periodic critical cycle of order 4 and an extra critical
value  not lying in this cycle has a deformation space with infinitely many components.
We study the structure of the augmented deformation space for this example,
and show, in particular, that the closure of deformation space in augmented deformation space
 is also disconnected in this case. \end{abstract}

\maketitle
\begin{center}
{\em In celebration of L\^e D\~ung Tr\'ang's 70th birthday}
\end{center}

\section{Introduction}

Let $A$ and $B$ be two finite subsets of the 2-dimensional sphere $S^2$ such that $|A|,|B| \ge 3$, and
let $f,\iota : (S^2,A) \rightarrow (S^2,B)$ be two maps of pairs:
$$
f : (S^2, A) \rightarrow (S^2,B)
$$
a branched covering with branch values contained in $B$, and 
$$
\iota : (S^2,A) \hookrightarrow (S^2,B)
$$
a homeomorphism identifying domain and range, and $A$ with a subset of $B$.

The {\it Epstein deformation space} $\Def_{f,\iota}$  is defined as the equalizer of the induced maps on Teichm\"uller spaces
$$
f^*,\iota^* : \Teich_{(S^2,B)} \rightarrow \Teich_{(S^2,A)}.
$$
 In unpublished work from the 1990s, A. Epstein showed that if $f$ is not Latt\`es\footnote{See \cite{Milnor:Lattes} for definitions.}, then  $\Def_{f,\iota}$ is either empty or 
a complex $|B| - |A|$ dimensional  submanifold of  $\Teich_{(S^2,B)}$ \cite{Epstein}. 
This generalizes a seminal result of W. Thurston who showed that in the {\it postcritically finite} case, when $ A = B$, 
either $\Def_{f,\iota}$ is empty, $f$ is a Latt\`es example, or $\Def_{f,\iota}$ contains exactly one point \cite{DH93}.  
In particular, $\Def_{f,\iota}$ is always connected in the postcritically finite case. 
When $f$ is not post-critically finite, then $\Def_{f,\iota}$ need not be connected \cite{HK:rational}, but so far there is only
one known class of examples.

\begin{problem}  What are necessary and sufficient conditions for $\Def_{f,\iota}$ to be connected?
\end{problem}

The counter-example to connectedness in \cite{HK:rational} is part of a class of quadratic rational maps studied by
Milnor in \cite{Milnor:Per}.   These are denoted $\Per_n(0)$ and
have one periodic $n$-cycle 
containing a single critical point, and the behavior of the other critical value under iteration is unspecified.  If in addition the extra critical
value does not lie in the first critical $n$-cycle, the rational map is said to be in $\Per_n(0)^*$.
To an element $F \in \Per_n(0)^*$ let $f,\iota : (S^2,A) \rightarrow (S^2,B)$ be
defined so that $f$ is the topological covering underlying $F$, $\iota$ is a identification of
domain and range of $f$ so that  $A \subset B$,  $A$ is the critical $n$-cycle, and $B$ is the union of $A$ and the extra  critical point.  
For $n=3$, $\Teich_{(S^2,A)}$ is a singleton set, and hence
$\Def_{f,\iota}$ is the entire space $\Teich_{(S^2,B)}$.
For $n=4$, the work in \cite{HK:rational} shows that $\Def_{f,\iota}$ is not connected, and in fact has infinitely many
connected components.  This leads to the following question.

\begin{question} Does $\Def_{f,\iota}$ have a natural connected closure?
\end{question}

The points in $\Def_{f,\iota}$ are sometimes called the {\it dynamical points} in the space of complex structures
on $\Teich_{(S^2,B)}$.  The {\it augmented deformation space} $\AugDef_{f,\iota}$,
or {\it ideal dynamical points}, is the subset  of the Bers
augmented deformation space $\AugTeich_{(S^2,B)}$  (see \cite{Bers:ATS}) defined as the equalizer of extensions
$$
\widetilde f^*,\widetilde\iota^* : \AugTeich_{(S^2,B)} \rightarrow \AugTeich_{(S^2,A)}
$$
of $f^*$ and $\iota^*$.  

At the time of this writing, it is not known whether $\AugDef_{f,\iota}$ is connected in the $\Per_4(0)^*$ case.   Our goal in this paper is to 
extend the techniques of \cite{HK:rational}, and apply them 
to give a partial description of the structure of $\AugDef_{f,\iota}$ and the closure of $\Def_{f,\iota}$
within it.  We prove the following theorem.

\begin{theorem} \label{main-thm}  For $(f,\iota)$ associated to an element  of $\Per_4(0)^*$,
the closure of $\Def_{f,\iota}$ in augmented deformation space $\AugDef_{f,\iota}$ is not connected.
\end{theorem}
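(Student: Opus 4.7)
The plan is to extend the discrete invariant that distinguishes components of $\Def_{f,\iota}$ in \cite{HK:rational} to the closure of $\Def_{f,\iota}$ inside $\AugDef_{f,\iota}$, and thereby exhibit a continuous function on that closure which takes infinitely many values. Concretely, the components described in \cite{HK:rational} for the $\Per_4(0)^*$ case are indexed by an integer winding parameter arising from Dehn-twist ambiguities in how the orbit of the free critical value threads around the periodic critical $4$-cycle. To establish Theorem~\ref{main-thm}, it suffices to exhibit at least two distinct values of this invariant realized on the closure and to show that the preimage of each value is relatively open.

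First I would recall the description of $\AugTeich_{(S^2,B)}$ as a stratified space whose strata are indexed by multicurves $\Gamma$ on $(S^2,B)$, where the $\Gamma$-stratum parametrizes noded complex structures obtained by pinching the curves in $\Gamma$. The extensions $\widetilde f^{*}$ and $\widetilde\iota^{*}$ respect this stratification and act on multicurves by pullback, so a point of $\AugDef_{f,\iota}$ lying on the $\Gamma$-stratum forces $\Gamma$ to satisfy a self-consistency condition under $f$. The first sub-step is therefore to enumerate the admissible multicurves $\Gamma$ for $(f,\iota)$ in $\Per_4(0)^{*}$; since $|A|=4$ and $|B|=5$, I expect only a short list of combinatorial types to survive.

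Next I would promote the winding invariant to a locally constant function on a neighborhood of the closure of $\Def_{f,\iota}$ in $\AugDef_{f,\iota}$. Near an ideal point on a $\Gamma$-stratum, most of $(S^2,B)$ survives unpinched, and one can measure winding relative to a canonical arc system on the non-degenerate part; this should reproduce the invariant of \cite{HK:rational} in the interior and extend continuously across the boundary. Combining this with the classification from the previous step gives a clopen decomposition of the closure indexed by integers, and in particular disconnects it.

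The main obstacle, where most of the work will concentrate, is confirming that the winding is not absorbed into a pinching curve at the boundary. Dehn twists along any $\gamma\in\Gamma$ act trivially on the $\Gamma$-stratum of $\AugTeich_{(S^2,B)}$, so if $\gamma$ were the curve carrying the winding then the invariant would collapse in the limit and distinct components of $\Def_{f,\iota}$ could fuse in $\AugDef_{f,\iota}$. My strategy for ruling this out is to exploit the dynamical constraint from the preceding step: the admissible $f$-invariant multicurves near $\Def_{f,\iota}$ are too rigid to absorb twisting around the orbit of the free critical value, leaving the winding parameter genuinely visible in every ideal limit that is accessible from $\Def_{f,\iota}$.
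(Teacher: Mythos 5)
Your overall logical frame is sound, and your first sub-step (enumerating the multicurves/partitions that can support points of $\AugDef_{f,\iota}$) is genuinely parallel to what the paper does in Proposition~\ref{connectedness-prop}, where the admissible boundary strata are cut down to two one-dimensional pieces $\sA_1$ (partition $\{\infty,y,z\}\cup\{0,1\}$) and $\sA_2$ (partition $\{0,1,z\}\cup\{y,\infty\}$). The gap is exactly where you locate "most of the work": your claim that the winding invariant extends continuously across the boundary, i.e.\ that it "is not absorbed into a pinching curve," is asserted rather than proved, and the assertion that the admissible invariant multicurves are "too rigid to absorb twisting" is precisely the statement whose proof constitutes the substance of the theorem. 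Nothing in your outline supplies a mechanism for verifying it, and it is not a soft combinatorial fact: a priori the accessible boundary strata could identify limits of distinct components (indeed, the paper leaves open whether $\AugDef_{f,\iota}$ itself is connected, so some such fusion is not absurd).

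What actually closes this gap in the paper is concrete algebraic geometry rather than a locally constant invariant. One parametrizes $\Moduli_f$ as $\P^1\times\P^1\setminus(\sL\cup\sZ)$ with $\V_{f,\iota}$ the diagonal, blows up $(0,0)$, $(1,1)$, $(\infty,\infty)$ and one further point to obtain exceptional curves $E_\infty,E_q$ realizing $\sA_1,\sA_2$, and checks that the proper transform $\widehat\V$ of the diagonal is disjoint from $\widehat\sZ$. The separating object is then not an integer invariant but a single element $g\in S_{f,\iota}$, represented by a loop running along $X=(\widehat\V\cup E_\infty\cup E_q)\setminus\widehat{\sL\cup\sZ}$ around an intersection of $\widehat\sZ$ with $E_\infty\cup E_q$; the disjointness of $\widehat\V$ from $\widehat\sZ$ shows $g$ is not in the image of $\pi_1(\widehat\V)$, hence $g$ carries the closure $\overline{\sD_0}$ of one component onto a disjoint closed set $g(\overline{\sD_0})$, both lying in distinct components of the preimage of $\widehat\V$, which yields the clopen separation. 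If you want to salvage your version, you would need to identify your winding class with such an explicit element of $S_{f,\iota}\setminus E_{f,\iota,d_0}$ and then carry out the blowup computation (or an equivalent local analysis near the strata $\sA_1,\sA_2$) to show the class survives; without that input the argument does not go through.
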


\subsection{Some background and ideas behind the proofs}
The question of whether and when $\Def_{f,\iota}$ is connected has roots in work of Thurston from the 1980s, in which he
showed that if 
$$
F : \P^1 \rightarrow \P^1
$$
is a non-Latt\`es rational map from the complex projective line 
to itself with a finite post-critical set
$$
 \bigcup_{n=1}^\infty F^{(n)}(\mbox{Crit}_F),
$$
and $f : (S^2,P) \rightarrow (S^2,P)$ is the corresponding branched covering of pointed spheres with domain and range identified, 
and postcritical set $P$,
then the lifting map on holomorphic markings defines a contracting map on Teichm\"uller space
$$
f^* : \Teich_{(S^2,P)} \rightarrow \Teich_{(S^2,P)}.
$$
Thus, $f^*$ has a unique fixed point, and hence $\Def_{f,\iota}$ is a singleton set.

This classical result suggests that there could be a contracting flow in the general case
when the identification map $\iota: (S^2,A) \rightarrow (S^2,B)$ is a strict inclusion on $A$.  
If such a flow exists,  there are two possibilities: one is that $\Def_{f,\iota}$ is connected, and the other is that some
points may be pushed out to the boundary of $\Teich_{(S^2,B)}$.  This suggests looking at dynamical elements
of the augmented Teichm\"uller space to find a natural connected completion of $\Def_{f,\iota}$.

The proof in \cite{HK:rational}, that $\Def_{f,\iota}$ can be disconnected translates the problem about flows on  Teichm\"uller spaces
to one about the topology of algebraic varieties.   A key ingredient is to define an intermediate covering $\Moduli_f$
of $\Teich_{(S^2,B)} \rightarrow \Moduli_{(S^2,B)}$ that is a natural quotient of the space of marked rational maps
combinatorially equal to $f$.    The projection  $\Moduli_f \rightarrow \Moduli_{(S^2,B)}$ is a finite covering, and 
hence has the structure of a quasi-projective variety.  

 In the case when $F \in \Per_4(0)^*$, the space $\Moduli_f$ is isomorphic to a Zariski
dense subset of $\P^1 \times \P^1$:
$$
\Moduli_f = \P^1 \times \P^1 \setminus \sZ \cup \sL
$$
where $\P^1 \times \P^1 \setminus \sL = \Moduli_{(S^2,A)} \times \Moduli_{(S^2,A)}$, and $\sZ$ indicates the locus
where the both critical points of $f$ lie in the same periodic cycle.
With respect to this parameterization, the  image of $\Def_{f,\iota}$ in $\Moduli_f$ is the diagonal
$$
\V_{f,\iota}= \{(x,x) \in \P^1 \times \P^1 \ | \ (x,x) \not\in \sZ \cup \sL\}.
$$
Let $L_f$ be the group of covering automorphisms of $\Teich_{(S^2,B)}$ over $\Moduli_{f,\iota}$.  It was shown in \cite{HK:rational} that
the projection of $\Def_{f,\iota}$ on $\V_{f,\iota}$ is a regular covering with covering automorphism
group a proper subgroup $S_{f,\iota} \subset L_f$.  In particular, $S_{f,\iota}$ acts transitively on fibers of $\Def_{f,\iota} \rightarrow \V_{f,\iota}$.

Choose a basepoint $d_0 \in \Def_{f,\iota}$ and  let $v_0$ be its image in $\V_{f,\iota}$.  Then 
we can identify $L_f$ with $\pi_1(\Moduli_f,v_0)$ and $S_{f,\iota}$ with a subgroup of the fundamental group.  Let $E_{f,\iota}$ be
the image of $\pi_1(\V_{f,\iota},v_0) \rightarrow \pi_1(\Moduli_f,v_0)$ induced by the inclusion map.  Then
$E_{f,\iota}$ is the subgroup of $S_{f,\iota}$ that fixes the component of $\Def_{f,\iota}$ containing $d_0$;
$E_{f,\iota}$ has infinite index in $S_{f,\iota}$; and hence $\Def_{f,\iota}$ has infinitely many components.
In other words, the image of $\pi_1(\V_{f,\iota},v_0)$ is not sufficiently large in $\pi_1(\Moduli_f,v_0)$. 
One way to try to rectify 
this is to put both $\Moduli_f$ and  $\V_{f,\iota}$ into a larger ambient space.

Let $\AugTeich_{(S^2,B)}$ be the augmented Teichm\"uller space of $(S^2,B)$.  Then   $L_f$ 
extends to an action on $\AugTeich_{(S^2,B)}$ giving a quotient $\AugModuli_f$ and 
 a commutative diagram
$$
\xymatrix{
\Teich_{(S^2,B)}\ar[d]_{/L_f} \ar@{^(->}[r] &\AugTeich_{(S^2,B)}\ar[d]^{/L_f}\\
\Moduli_f \ar@{^(->}[r] &\AugModuli_f.
}
$$
Similarly, $S_{f,\iota}$ acts on $\AugDef_{f,\iota}$ with quotient denoted $\AugV_{f,\iota}$.  The spaces 
$\AugV_{f,\iota}$ and $\AugModuli_f$ have the advantage of being algebraic geometric sets, and can be studied 
via singularity theory.  

We define a connected pure 1-dimensional algebraic subset $X$ of a
blowup $\widetilde {\P^1 \times \P^1}$
of $\P^1 \times \P^1$ and an embedding  $X \rightarrow \AugV_{f,\iota}$ that is surjective except possibly
a finite set of points (Proposition~\ref{connectedness-prop}).   By studying
properties of $X$ and its embedding in $\widetilde {\P^1 \times \P^1}$ we prove Theorem~\ref{main-thm}.

\subsection{Organization}  In Section~\ref{teich-sec}, we give necessary definitions of Teichm\"uller and moduli spaces for
rational maps, and their augmented versions.  In Section~\ref{topology-sec},
we prove some general properties of complements of plane algebraic curves 
and their blowups. 
In Section~\ref{proof-sec}, we apply these ideas to the $\Per_4(0)^*$ case, and prove Theorem~\ref{main-thm}.

\subsection{Further questions}

There are still many open questions along the lines of this investigation. So far, the $\Per_4(0)^*$ example is the only known case when $\Def_{f,\iota}$ is disconnected.  Are there others?
Is $\AugDef_{f,\iota}$ connected in the $\Per_4(0)^*$ case, and is $\AugDef_{f,\iota}$ connected in general?  
The analysis in this paper suggest general approaches to these questions, which we leave for further investigation.

\subsection{Some comments on notation}  This paper grew out of the ideas in \cite{HK:rational}, but some of the notation 
has changed.
Most notably, we refer to the Teichm\"uller space $\sT_f$ of
rational maps combinatorially equivalent to $f$, and its corresponding Moduli space $\sM_f$. 
Thus, Epstein's deformation space $\sD_{f,\iota}$ will be considered as a subspace of $\sT_f$ rather than of the isomorphic
space $\sT_{(S^2,B)}$ (also known as the {\it parameter space}).
The space $\sM_f$ may be more familiarly known as a connected component of a Hurwitz space of rational maps \cite{Ramadas:Hurwitz} and was
denoted by $\sW_f$ in \cite{HK:rational}.   
A smaller change is the use of $\sD_{f,\iota}$ for the deformation space $\sD_f$ to emphasize the dependence on both 
the topological branched covering of pairs $f$ and the identification $\iota$.  Similarly, we changed the notation of $S_f$
to $S_{f,\iota}$.  These ease our transition  from a discussion of deformation space to  augmented deformation space.

\subsection{Acknowledgments} I would like to thank 
X. Buff, S. Koch, C. McMullen, R. Ramadas, and L. D. Tr\'ang  for helpful references and discussions, and 
 the anonymous referee for their careful reading and useful comments.

\section{Background definitions}
In this section, we recall definitions and properties of Teichm\"uller space, moduli space and deformation space 
for rational maps and their augmented versions.  For more details about the general theory see, for example, \cite{DMcompactification}, \cite{Bers:ATS},
\cite{HubKoch}, \cite{Ramadas:Hurwitz} and \cite{Selinger:Aug}.  We also recall definitions of liftables $L_f$ and special liftables
$S_{f,\iota}$  from \cite{HK:rational},  and examine their extensions to the augmented spaces.

\subsection{Teichm\"uller spaces for rational maps}\label{teich-sec}
Let $A$ be a finite subset of 3 or more points of the topological 2-sphere $S^2$.  The
{\it Teichm\"uller space} 
$
\Teich_{(S^2,A)}
$
of {\it holomorphic markings} on $(S^2,A)$ is the collection of orientation preserving homeomorphisms
$$
\phi : (S^2,A) \rightarrow (\P^1,\phi(A))
$$
defined up to post-composition by automorphisms of $\P^1$ (i.e., M\"obius transformations) and pre-composition by self-homeomorphisms of $S^2$ that are isotopic rel $A$ to the identity.

Similarly, given a finite branched covering of pairs $f : (S^2, A) \rightarrow (S^2, B)$, where $\infty > |A|, |B| \ge 3$ and
$B$ contains the branch values (or critical values) of $f$, we define the {\it Teichm\"uller space} $\Teich_{f}$ of {\it holomorphic markings} 
on $(f,A,B)$ as the set of  commutative diagrams
$$
\xymatrix{
(S^2,A) \ar[r]^-{\psi}\ar[d]_f &(\P^1,\psi(A))\ar[d]^F\\
(S^2,B) \ar[r]^-\phi& (\P^1,\phi(B))
}
$$
also denoted by $(\phi,\psi,F)$,
where $F$ is a rational map, $\phi$ and $\psi$ are homeomorphisms.  Two triples $(\phi,\psi,F)$ and $(\phi_1,\psi_1,F_1)$ in $\Teich_f$ are
 {\it equivalent} if there  are homeomorphisms $\alpha:  (S^2,A) \rightarrow (S^2,A)$  isotopic to the
 identity map rel $A$ and $\beta:  (S^2,B) \rightarrow (S^2,B)$ isotopic to the identity rel $B$, and biholomorphic maps $\mu,\nu : \P^1 \rightarrow \P^1$
 so that the diagram
 $$
\xymatrix{
(\P^1,\psi_1(A)) \ar[d]_{F_1}&(S^2,A) \ar[l]_-{\psi_1} \ar[d]_f\ar[r]^-{\alpha} &(S^2,A)\ar[d]_f\ar[r]^\psi &(\P^1,\psi(A))\ar[d]_F\\
(\P^1,\phi_1(B)&(S^2,B) \ar[l] _-{\phi_1} \ar[r]^-{\beta} &(S^2,B) \ar[r]^\phi &(\P^1,\phi(B))
}
$$
commutes, and $ \mu = \psi \circ \alpha \circ \psi_1^{-1}$ and $\nu = \phi \circ \beta \circ \phi_1^{-1}$.

 By these definitions, $\Teich_{f}$ comes with natural surjections
 $$
 \xymatrix{
 &\Teich_{f}\ar[dl]_q\ar[dr]^{p_U} \\
 \Teich_{(S^2,B)} &&\Teich_{(S^2,A)}
}
$$
recording the holomorphic markings of the domain space
($p_U$) and the range space ($q$).  Furthermore, $\Teich_f$ can be thought of as the graph of a {\it lifting map}
$$
f^* : \Teich_{(S^2,B)} \rightarrow \Teich_{(S^2A)}
$$
 defined by $f$.  Thus, $q$ is an isomorphism of holomorphic spaces.

We create an iteration scheme from $f$ by partially identifying the domain $(S^2,A)$ and range $(S^2,B)$ of $f$.  That is, 
we fix a homeomorphism $\iota : S^2 \rightarrow S^2$ that restricts to an inclusion $\iota |_A : A \hookrightarrow B$.  
Then we have a map of pairs
$$
\iota : (S^2,A) \rightarrow (S^2,B).
$$
Let $p_L = \iota^* \circ q$, where $\iota^* : \Teich_{(S^2,B)} \rightarrow \Teich_{(S^2,A)}$ is the map that
takes $\phi \in \Teich_{(S^2,B)}$ to the class in $\Teich_{(S^2,A)}$ defined by 
$$
\phi \circ \iota : (S^2,A) \rightarrow (\P^1,(\phi \circ \iota)(A)) = (\P^1, \phi(A)).
$$
Then the elements of $\Teich_f$ can be thought of as the  holomorphic markings of the branched covering $f$, and $p_U$ and $p_L$ record
the induced marked  holomorphic structures $(S^2,A)$ by the domain and range.  

The 
Epstein's {\it deformation space} $\Def_{f,\iota}$ is 
the subspace of $\Teich_f$ 
consisting of holomorphic structures on the covering and base of $f$ that are equivalent relative to $A$.  That 
is,  $\Def_{f,\iota}$ consists of the triples $(\phi,\psi,F)$ so that
$\phi^{-1} \circ \psi$ is  isotopic to the identity relative to $A$, or equivalently 
$$
f^*\phi = \iota^*\phi.
$$
Another way to say this is that $\Def_{f,\iota}$ is the equalizer in $\Teich_f$ of the maps $p_U$ and $p_L$, that is
$$
\Def_{f,\iota} = \{ \phi \in \Teich_f \ | \ p_U(\phi, i) = p_L(\phi)\}.
$$
We think of $\Def_{f,\iota}$ as the {\it dynamical Teichm\"uller space} for $(f,\iota)$.  

\subsection{Moduli spaces}\label{moduli-sec}
Let $\Moduli_{(S^2,A)}$ be the space of
embeddings
$$
A \hookrightarrow \P^1
$$
up to post-composition by a M\"obius transformation.  Then the restriction map $[\phi] \mapsto [\phi |_A]$
defines a regular covering map
$$
\Teich_{(S^2,A)} \rightarrow \Moduli_{(S^2,A)}
$$
with covering automorphism group equal to the 
 {\it mapping class group}  $\Mod(S^2,A)$
 of orientation preserving homeomorphisms $h : S^2 \rightarrow S^2$ that
 fix the points of $A$ up to isotopy rel $A$.  (Unlike the case for Teichm\"uller spaces and moduli space of
 general surfaces, $\Mod(S^2,A)$ on $\Teich_{(S^2,A)}$ acts without fixed points.)

Similarly let $\Moduli_f$ be the space of commutative diagrams
 $$
 \xymatrix{
 A\ar[d]_{f |_A}\ \ar@{^(->}^j [r] &\P^1\ar[d]^F\\
 B\ \ar@{^(->}^i [r]&\P^1.
 }
 $$
 where  $(i,j,F)$ is defined up to modifications of the domain and range of $F$ by M\"obius transformations.
 Then the map $(\phi,\psi, F) \mapsto (\phi |_B, \psi |_A, F)$ defines a covering map
 $$
 \Teich_f \rightarrow \Moduli_f,
 $$
with covering automorphism group $L_f \subset \Mod(S^2,B)$ called the subgroup of {\it liftables}
consisting of elements $h \in \Mod(S^2,B)$ such that for some $h' \in \Mod(S^2,A)$ the diagram
$$
\xymatrix{
(S^2,A)\ar[d]_-f \ar[r]^{h'} &(S^2,A)\ar[d]^-f \\
(S^2,B) \ar[r]^h &(S^2,B)
}
$$
commutes.

\begin{remark}
Since $A$ and $B$ are assumed to contain at least three points, $f: (S^2,A) \rightarrow (S^2,B)$ can have no non-trivial covering automorphisms.
In the degree 2 case, this follows from the fact that $f$ must have exactly two branch points and two branch values.  Any other marked point would
lie in the unbranched part of the covering.   Even in the case when the degree of $f$ is greater than 2, the fact that $f$ may be realized as a 
rational map implies that all covering automorphisms must be conjugate to a M\"obius transformation.  If there are at least three points in $A$,
then the M\"obius transformation must be the identity.
\end{remark}

By the definition of $L_f$ and since $f$ has no non-trivial covering automorphisms,  $f$ defines a 
unique lifting map
$$
f^\sharp: L_f \rightarrow \Mod(S^2,A)
$$
where $f^\sharp h = h'$.

 The following Proposition was shown using slightly different language in \cite{K13} (cf. \cite{HK:rational} Proposition 2.2).
For the convenience of the reader, we include a proof below.

\begin{proposition} The map $\Teich_{f} \rightarrow \Moduli_{f}$ is a regular covering with automorphism group $L_{f}$.
\end{proposition}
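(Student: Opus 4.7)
The plan is to exploit the fact, already noted in the excerpt, that the forgetful map $q : \Teich_f \to \Teich_{(S^2,B)}$ is an isomorphism (because $\Teich_f$ is literally the graph of the lifting map $f^* : \Teich_{(S^2,B)} \to \Teich_{(S^2,A)}$). Under this identification, the target map $\Teich_f \to \Moduli_f$ factors through the known regular covering $\Teich_{(S^2,B)} \to \Moduli_{(S^2,B)}$ with deck group $\Mod(S^2,B)$, since the moduli-level restriction $(\phi|_B,\psi|_A,F) \mapsto \phi|_B$ gives a forgetful map $\Moduli_f \to \Moduli_{(S^2,B)}$. So the strategy is to show that the deck transformations of $\Teich_{(S^2,B)} \to \Moduli_{(S^2,B)}$ that descend compatibly to $\Moduli_f$ are precisely the liftables $L_f$.

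First I would define the action of $L_f$ on $\Teich_f$. Given $h \in L_f$, the remark in the excerpt about $f$ having no nontrivial covering automorphisms guarantees that the lift $h' = f^\sharp h \in \Mod(S^2,A)$ is unique, so the formula $h\cdot [(\phi,\psi,F)] = [(\phi\circ h^{-1},\psi\circ (h')^{-1},F)]$ makes sense; one checks it is well-defined on equivalence classes by unwinding the big commutative diagram that defines the equivalence relation on $\Teich_f$. This action is free because the action of $\Mod(S^2,B)$ on $\Teich_{(S^2,B)}$ is free. It preserves fibers of $\Teich_f \to \Moduli_f$ essentially by construction, because pre-composition by $h^{-1}$ on the marking $\phi$ and by $(h')^{-1}$ on $\psi$ does not change the induced embeddings $\phi|_B$ and $\psi|_A$ up to M\"obius transformations.

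The core of the argument is then transitivity on fibers. Suppose $(\phi,\psi,F)$ and $(\phi_1,\psi_1,F_1)$ have the same image in $\Moduli_f$. Then there exist M\"obius transformations $\nu,\mu$ such that $\nu\circ\phi|_B$ and $\phi_1|_B$ differ by a permutation of $B$ extending to a homeomorphism isotopic to the identity rel $B$ (and similarly for $\mu\circ\psi|_A$ and $\psi_1|_A$), and such that $F_1 = \nu\circ F\circ\mu^{-1}$. These data assemble into a self-homeomorphism $h$ of $(S^2,B)$ whose mapping class satisfies $[\phi_1] = h\cdot[\phi]$ in $\Teich_{(S^2,B)}$, and the compatibility $F_1 = \nu\circ F\circ\mu^{-1}$ together with the commutativity of the $\Teich_f$ diagram produces a lift $h'\in\Mod(S^2,A)$ with $f\circ h' = h\circ f$, so $h \in L_f$.

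The main obstacle I anticipate is bookkeeping: carefully tracking how the two layers of equivalence (post-composition by $\mu,\nu$ and pre-composition by isotopies $\alpha,\beta$) interact when passing from a moduli-level coincidence to an honest element of $\Mod(S^2,B)$, and verifying that the mapping class $h$ so produced really is independent of the choices of $\mu,\nu$. Once that is settled, the covering assertion and regularity both follow: the quotient map $\Teich_f/L_f \to \Moduli_f$ is a bijection on points by transitivity and freeness, and it is a local homeomorphism because it is a restriction of the quotient $\Teich_{(S^2,B)}/\Mod(S^2,B) \to \Moduli_{(S^2,B)}$ along the finite covering $\Moduli_f \to \Moduli_{(S^2,B)}$.
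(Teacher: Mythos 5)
Your proof is correct and follows essentially the same route as the paper's: show that the (unique, by absence of deck transformations of $f$) lift $f^\sharp h$ lets each $h\in L_f$ act on $\Teich_f$ preserving fibers of $\Teich_f\to\Moduli_f$, and conversely that two triples with the same image in $\Moduli_f$ differ, after normalizing by M\"obius transformations, by a mapping class $h$ whose compatibility with $F$ exhibits it as liftable. Your added remarks on freeness of the action and on deducing the covering property from the intermediate-covering factorization through $\Moduli_{(S^2,B)}$ are fine supplements but not a different argument.
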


\begin{proof}
Let $h \in L_{f}$.   Take any $(\phi,\psi,F) \in \Teich_{f}$.  Then we have
the commutative diagram
\begin{eqnarray}\label{modaction-eq}
\xymatrix{
(S^2,A)\ar[d]_f &(S^2,A) \ar[l]^{f^\sharp h} \ar[r]^{\psi}\ar[d]_f&(\P^1, \psi(A))\ar[d]^F\\
(S^2,B)  & (S^2,B)\ar[l]^h \ar[r]^\phi &(\P^1,\sB).
}
\end{eqnarray}
Since $h$ and $f^\sharp h$ fix $B$ and $A$, respectively,
 $(\phi _B, \psi _A, F)$ and $(\phi\circ h, \psi\circ f^\sharp h, F)$ map to the same element in $\Moduli_{f}$.

Conversely, suppose $(\phi,\psi,F)$ and $(\phi_1,\psi_1, F_1)$ both map to equivalent elements in $\Moduli_{f}$.
Then by definition, there are embeddings $i : B \rightarrow \P^1$ and $j : A \rightarrow \P^1$ so
that $i = \phi |_B = \phi_1 |_B$, $j = \psi_1 |_A = \psi |_A$ and there are M\"obius transformations $\mu$ and $\nu$ such that
$F = \mu^{-1} \circ F_1 \circ \nu$.  We can assume for simplicity that $F = F_1$ by replacing $(\phi_1,\psi_1,F_1)$ by 
the equivalent element $(\mu^{-1} \circ\phi_1,\nu^{-1}\circ \psi_1 , F)$ in $\Teich_{f}$.
Then the common image of $(\phi,\psi,F)$ and $(\phi_1,\psi_1,F)$ is some $(i,j,F)$ in $\Moduli_{f}$.
Thus the images of $\phi$ and $\phi_1$ are the same in $\Moduli_{(S^2,B)}$,  the images of $\psi$ and $\psi_1$
are the same in $\Moduli_{(S^2,A)}$, and hence there are mapping classes $h \in \Mod(S^2,B)$ and $f^\sharp h \in \Mod(S^2,A)$
such that $\phi = h \circ \phi_1$ and $\psi = f^\sharp h \circ \psi_1$.  The maps $h$ and $f^\sharp h$ complete a diagram of the form (\ref{modaction-eq}),
and thus $h \in L_{f}$.
\end{proof}

Let $\overline q : \Moduli_f \rightarrow \Moduli_{(S^2,B)}$ be the map sending $(i,j,F)$ to the equivalence class containing $i$ in $\Moduli_{(S^2,B)}$;
let $\overline p_U: \Moduli_f \rightarrow \Moduli_{(S^2,A)}$ be the map sending $(i,j,F)$ to the equivalence class containing $j$ in $\Moduli_{(S^2,A)}$;
let $\overline \iota^* : \Moduli_{(S^2,B)} \rightarrow \Moduli_{(S^2,A)}$ be the forgetful map sending an inclusion $i : B \rightarrow \P^1$
to $i \circ \iota |_A$; and
let $\overline p_L = \overline \iota^* \circ \overline q$.
Then we have the commutative diagram
$$
\xymatrix{
&\Teich_{f} \ar[dl]_{\simeq}^q \ar[dr]^{p_U}\ar[dd]^{\rho}\\
 \Teich_{(S^2,B)}\ar[dd]_{/ \Mod(S^2,B)}\ar[dr]^{/L_f}&&\Teich_{(S^2,A)}\ar[dd]^{/ \Mod(S^2,A)}\\
&\Moduli_{f} \ar[dl]_{\overline q}\ar[dr]^{\overline p_U}\\
\Moduli_{(S^2,B)} && \Moduli_{(S^2,A)}.
}
$$
All vertical arrows and the left three diagonal arrows are unbranched covering maps.  The right diagonal arrows may not be surjective (see \cite{BEKP:Pullback}).
Let $\V_{f,\iota}$ be the image of the deformation space $\Def_{f,\iota}$ in $\sM_f$.  Then we have
$$
\V_{f,\iota} = \Eq(\overline p_L, \overline p_U).
$$

\subsection{Stabilizer of deformation space}  
Fix a basepoint $d_0 \in \Def_{f,\iota}$, and let $m_0 \in \Moduli_{f,\iota}$ be the image of $d_0$ under the map
$\rho$. Then we have an identification
$$
\ell: \pi_1(\Moduli_{f,\iota},m_0) \rightarrow L_f,
$$
defined by the path-lifting theorem for coverings.  That is, for each $\gamma \in \pi_1(\Moduli_{f,\iota},m_0)$, we lift $\gamma$
to a path $\gamma'$ on $\Teich_{(S^2,B)}$ based at $d_0$, and let $\ell(\gamma)$ be the 
(unique, since the covering is regular) mapping class that takes $d_0$ to the end point 
of $\gamma'$.   

\begin{proposition}  The restriction of $\rho : \sT_f \rightarrow \sM_f$ to 
$\Def_{f,\iota}$ gives a covering map
$$
\rho_{\mathcal D} : \Def_{f,\iota} \rightarrow \V_{f,\iota},
$$
and the image $E_{f,\iota,d_0}$ of 
$$
\pi_1(\V_{f,\iota}, m_0) \rightarrow \pi_1(\Moduli_{f,\iota},m_0) \overset \ell\rightarrow L_f
$$
is the stabilizer of the connected component of $\Def_{f,\iota}$ that contains $d_0$.
\end{proposition}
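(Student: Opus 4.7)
The plan is to identify $\Def_{f,\iota}$ as a union of connected components of the preimage $\rho^{-1}(\V_{f,\iota})$, after which both assertions follow from standard covering space arguments applied to the restriction of $\rho$. Observe first that $\rho^{-1}(\V_{f,\iota}) \to \V_{f,\iota}$ is itself a regular covering with deck group $L_f$: the $L_f$-action on $\sT_f$ preserves fibers of $\rho$ and hence preserves any preimage.

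Next I would introduce the ``discrepancy'' map
\[
g : \rho^{-1}(\V_{f,\iota}) \to \Mod(S^2,A), \qquad g(y)\cdot p_L(y) = p_U(y),
\]
which is well defined because the condition $y \in \rho^{-1}(\V_{f,\iota})$ means that $p_U(y)$ and $p_L(y)$ have the same image in $\sM_{(S^2,A)}$, and single-valued because $\Mod(S^2,A)$ acts freely on $\Teich_{(S^2,A)}$ (as recorded in Section~\ref{moduli-sec}). By construction, $\Def_{f,\iota} = g^{-1}(\mathrm{id})$. The key technical step is to show that $g$ is locally constant. I would use the proper discontinuity of the $\Mod(S^2,A)$-action: for $y_n \to y$ in $\rho^{-1}(\V_{f,\iota})$, both $p_L(y_n) \to p_L(y)$ and $g(y_n)\cdot p_L(y_n) = p_U(y_n) \to p_U(y) = g(y)\cdot p_L(y)$, so the elements $g(y_n)g(y)^{-1}$ eventually send a point near $g(y)\cdot p_L(y)$ back to a nearby point; properness together with freeness then force $g(y_n) = g(y)$ for $n$ large. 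Hence $\Def_{f,\iota}$ is both open and closed in $\rho^{-1}(\V_{f,\iota})$, i.e., a union of connected components. Since $\rho_{\mathcal D}$ is surjective by the very definition of $\V_{f,\iota}$ as $\rho(\Def_{f,\iota})$, restricting the regular covering $\rho^{-1}(\V_{f,\iota}) \to \V_{f,\iota}$ to this union of components produces the covering map $\rho_{\mathcal D}$.

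For the stabilizer statement, let $D_0$ be the connected component of $\Def_{f,\iota}$ containing $d_0$; by the previous step, $D_0$ is also a component of $\rho^{-1}(\V_{f,\iota})$. Given $\gamma \in \pi_1(\V_{f,\iota}, m_0)$, I would lift $\gamma$ via $\rho$ to a path starting at $d_0$; this lift lies entirely in $\rho^{-1}(\V_{f,\iota})$, begins in the clopen subset $D_0$, and therefore stays in $D_0$. Its endpoint is $\ell(\gamma)\cdot d_0 \in D_0$, so the deck transformation $\ell(\gamma)$ permutes components and must preserve $D_0$; thus $E_{f,\iota,d_0}$ is contained in the stabilizer. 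Conversely, if $h \in L_f$ stabilizes $D_0$, then $d_0$ and $h\cdot d_0$ both lie in the path-connected set $D_0$; joining them by a path inside $D_0$ and projecting to $\V_{f,\iota}$ yields a loop $\delta$ at $m_0$ for which $\ell(\delta) = h$ directly from the defining path-lifting description of $\ell$.

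The principal obstacle is the local constancy of $g$, which is the only place where the geometry of the $\Mod(S^2,A)$-action on $\Teich_{(S^2,A)}$ enters in an essential way; once this is in hand, both conclusions are routine consequences of covering space theory.
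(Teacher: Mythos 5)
Your argument is correct, but it reaches the two conclusions by a different mechanism than the paper. The paper's proof verifies the path-lifting property of $\rho_{\mathcal D}$ directly: given a path $(i_t,j_t,F_t)$ in $\V_{f,\iota}$, it writes down an explicit lift $(\xi\circ p_t,\eta\circ p_t,F_t)$ using a continuous family of homeomorphisms $p_t$, checks that this lift lies in $\Def_{f,\iota}$, and then appeals to covering space theory for the rest. You instead prove the stronger structural statement that $\Def_{f,\iota}$ is open and closed in the full preimage $\rho^{-1}(\V_{f,\iota})$, via the discrepancy map $g$ with $g(y)\cdot p_L(y)=p_U(y)$, whose local constancy you extract from freeness and proper discontinuity of the $\Mod(S^2,A)$-action on $\Teich_{(S^2,A)}$ (both available here, since $\Teich_{(S^2,A)}\rightarrow\Moduli_{(S^2,A)}$ is a regular covering on which the mapping class group acts without fixed points). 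Your route buys a cleaner derivation of the second assertion: once $D_0$ is known to be a component of $\rho^{-1}(\V_{f,\iota})$, both inclusions for the stabilizer are immediate from uniqueness of path lifts, whereas the paper leaves this to ``basic covering space theory.'' The paper's route is more concrete and exhibits the lifted marked rational maps explicitly, which is in the spirit of its later constructions. Two small points worth making explicit in your write-up: to conclude that the restriction of the covering to a clopen saturated subset is again a covering onto $\V_{f,\iota}$, you should choose \emph{connected} evenly covered neighborhoods, which is possible because $\V_{f,\iota}$ is locally (path) connected; and since the $\Mod(S^2,A)$-action in this paper is by precomposition, the element you feed to proper discontinuity is $g(y)^{-1}g(y_n)$ or $g(y_n)g(y)^{-1}$ according to the side of the action --- this is cosmetic and does not affect the argument.
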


\begin{proof}  We show that the projection $\rho_{\Def_{f,\iota}}$ satisfies the path-lifting theorem.
Let $d_0 = (\phi_0,\psi_0,F_0)$, and let
$(i_t,j_t,F_t)$ be a path in $\V_{f,\iota}$ with $m_0 = (j_0,i_0,F_0)$.   Let $\xi$ and $\eta$ be a representatives of the class
of $\phi_0$ and $\psi_0$ so that the diagram commutes
$$
\xymatrix{
S^2 \ar[d]_f \ar[r]^\eta &\P^1 \ar[d]^F\\
S^2 \ar[r]^\xi &\P^1,
}
$$
$\eta |_A = \psi_0 |_A$ and $\xi |_B = \psi |_B$.
Let $p_t : (S^2,B) \rightarrow (S^2,B_t)$ be any continuous family of homeomorphisms. Then
$$
(\xi \circ p_t |_{B_t}, \eta \circ p_t |_{A_t}, F_t) \sim (i_t, j_t, F_t)
$$
and $(\xi \circ p_t, \eta \circ p_t, F_t)$ is a lift of $(i_t,j_t,F_t)$ and lies in $\Def_{f,\iota}$.
 
Thus, as a restriction of
an unbranched covering map, $\rho_{\mathcal D}$ is itself is a covering map.  
The rest follows from basic covering space theory. 
\end{proof}

The lifting map  $f^\sharp : L_f \rightarrow \Mod(S^2,A)$ is uniquely defined
and satisfies the commutative diagram
$$
\xymatrix{
(S^2,A)\ar[d]_f \ar[r]^{f^{\sharp}h} & (S^2,A)\ar[d]^f\\
(S^2,B) \ar[r]^h &(S^2,B).
}
$$
For a homeomorphism $h : (S^2,B) \rightarrow (S^2,B)$, let $h_A$ be the element of $\Mod(S^2,A)$ defined by
ignoring the points of $B \setminus A$.  Then $h_A$ is the isotopy class of $h$ defined up to homeomorphisms 
isotopic to the identity rel $A$.  Define
\begin{eqnarray*}
\iota^\sharp : \Mod(S^2,B) &\rightarrow& \Mod(S^2,A)\\
h&\mapsto& h_A.
\end{eqnarray*}

Let $S_{f,\iota} \subset L_f$ be the equalizer
$$
S_{f,\iota} = \Eq(f^\sharp,\iota^\sharp) \subset L_f.
$$
By the identification of $L_f$ with $\pi_1(\Moduli_{f,\iota},m_0)$,  $S_{f,\iota}$ can equivalently be defined as the
equalizer of the  homomorphisms
$$
(\overline p_L)_*, (\overline p_U)_* : \pi_1(\Moduli_{f,\iota},m_0) \rightarrow \pi_1(\Moduli_{(S^2,A)},a_0)
$$
where $a_0 = \overline p_L(m_0) = \overline p_U(m_0)$, where $m_0 \in \V_{f,\iota}  = \Eq (\overline p_L,\overline p_U)$.

\begin{proposition} [\cite{HK:rational} Proposition 2.5] The stabilizer in $L_f$ of $\Def_{f,\iota}$ equals $S_{f,\iota}$, and 
$S_{f,\iota}$ acts transitively on the fibers of the covering
$$
\Def_{f,\iota} \rightarrow \V_{f,\iota}.
$$
Thus the covering is regular and $S_{f,\iota}$ is the group of covering automorphisms.
\end{proposition}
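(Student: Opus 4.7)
The plan is to derive both claims—that $S_{f,\iota}$ is the setwise stabilizer of $\Def_{f,\iota}$ in $L_f$ and that it acts transitively on the fibers of $\rho_{\mathcal D} : \Def_{f,\iota} \rightarrow \V_{f,\iota}$—from a single calculation identifying precisely when an $L_f$-translate of a point of $\Def_{f,\iota}$ lands back in $\Def_{f,\iota}$. Since $L_f$ already acts transitively on the fibers of $\Teich_f \rightarrow \Moduli_f$, pinning down this condition in terms of $h \in L_f$ will automatically control the stabilizer, the orbit structure on each fiber, and hence the regularity of the sub-covering.

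Concretely, take $d = (\phi,\psi,F) \in \Def_{f,\iota}$, so $\psi$ and $\phi \circ \iota$ define the same class in $\Teich_{(S^2,A)}$. Using diagram (\ref{modaction-eq}), the action of $h \in L_f$ replaces $d$ by the equivalent triple $(\phi\circ h,\ \psi\circ f^\sharp h,\ F)$. Membership of this triple in $\Def_{f,\iota}$ is the condition $\psi \circ f^\sharp h \sim (\phi\circ h)\circ \iota$ in $\Teich_{(S^2,A)}$. Using $\psi\sim \phi\circ\iota$ and eliminating $\phi$, this collapses to $\iota \circ f^\sharp h \sim h\circ \iota$ rel $A$, i.e. $f^\sharp h$ and $\iota^\sharp h$ agree as elements of $\Mod(S^2,A)$. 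Thus $h\cdot d \in \Def_{f,\iota}$ if and only if $h\in \Eq(f^\sharp,\iota^\sharp)=S_{f,\iota}$, and since this criterion is independent of the chosen $d$, the setwise stabilizer of $\Def_{f,\iota}$ in $L_f$ equals $S_{f,\iota}$.

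For transitivity on fibers, take any $d_0, d_1 \in \Def_{f,\iota}$ with $\rho(d_0)=\rho(d_1)$. Regularity of $\Teich_f\rightarrow \Moduli_f$ produces a unique $h \in L_f$ with $h\cdot d_0 = d_1$; since $d_1\in \Def_{f,\iota}$, the previous calculation forces $h \in S_{f,\iota}$. Hence $S_{f,\iota}$ acts transitively on each fiber of $\rho_{\mathcal D}$, and a subgroup of deck transformations acting transitively on fibers makes the covering regular with that subgroup as the full group of covering automorphisms.

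The main obstacle is not a geometric one but a bookkeeping one: keeping the equivalences in $\Teich_f$ and $\Teich_{(S^2,A)}$ straight throughout the calculation. A holomorphic marking is defined only up to post-composition by M\"obius transformations and pre-composition by homeomorphisms isotopic to the identity rel the marked set, and the equivalence on $\Teich_f$ further allows independent adjustments of the domain and range markings. The delicate point is to verify that the M\"obius and isotopy ambiguities absorbed when passing between representatives do not perturb the underlying mapping-class identity $f^\sharp h = \iota^\sharp h$, so that the equalizer condition in $\Mod(S^2,A)$ is genuinely the right invariant characterizing when $h$ preserves $\Def_{f,\iota}$.
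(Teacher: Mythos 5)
Your argument is correct; note that the paper itself does not prove this proposition but imports it from \cite{HK:rational} (Proposition 2.5), so there is no internal proof to compare against. Your single computation --- that for $d=(\phi,\psi,F)\in\Def_{f,\iota}$ and $h\in L_f$ the translate $(\phi\circ h,\psi\circ f^\sharp h,F)$ lies in $\Def_{f,\iota}$ if and only if $f^\sharp h=\iota^\sharp h$ in $\Mod(S^2,A)$, independently of $d$ --- is exactly the right pivot, and it does deliver the setwise stabilizer, fiber transitivity, and regularity in one stroke. The ``delicate point'' you flag at the end is genuinely the only place needing an explicit word: the reduction from ``$\psi\circ f^\sharp h$ and $\phi\circ h\circ\iota$ are equivalent markings'' to ``$f^\sharp h$ and $\iota^\sharp h$ are equal mapping classes'' is justified precisely because $\Mod(S^2,A)$ acts freely on $\Teich_{(S^2,A)}$ when $|A|\ge 3$ (recorded parenthetically in Section 2.2 of the paper), so two markings differing by precomposition with $g$ coincide in Teichm\"uller space exactly when $[g]=1$. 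With that one sentence added, the proof is complete; the concluding step (a subgroup of deck transformations acting freely and transitively on fibers of the sub-covering gives regularity) is the same inference the cited proposition itself draws.
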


\begin{corollary} \label{Defconsuff-cor} The deformation space $\Def_{f,\iota}$ is connected if and only if $\V_{f,\iota}$ is connected and  $S_{f,\iota} = E_{f,\iota,d_0}$.
\end{corollary}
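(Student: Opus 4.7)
The plan is to derive the statement as a routine bookkeeping exercise from the two propositions immediately preceding it, namely that $\rho_{\mathcal D} : \Def_{f,\iota} \to \V_{f,\iota}$ is a regular covering with deck group $S_{f,\iota}$, and that the stabilizer in $S_{f,\iota}$ of the component of $\Def_{f,\iota}$ containing the basepoint $d_0$ equals the image $E_{f,\iota,d_0}$ of $\pi_1(\V_{f,\iota},m_0)$ in $L_f$. The argument is simply covering-space theory applied to these facts.

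For the forward direction, I would assume $\Def_{f,\iota}$ is connected. Then because $\rho_{\mathcal D}$ is continuous and surjective, its image $\V_{f,\iota}$ is connected as well. Moreover, the unique component of $\Def_{f,\iota}$ is preserved by every deck transformation, so its stabilizer in $S_{f,\iota}$ is all of $S_{f,\iota}$; by the previous proposition this stabilizer is $E_{f,\iota,d_0}$, giving $S_{f,\iota} = E_{f,\iota,d_0}$.

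For the reverse direction, suppose $\V_{f,\iota}$ is connected and $S_{f,\iota} = E_{f,\iota,d_0}$. Since $\rho_{\mathcal D}$ is a regular covering of the connected base $\V_{f,\iota}$ with deck group $S_{f,\iota}$, the fiber $\rho_{\mathcal D}^{-1}(m_0)$ is a principal $S_{f,\iota}$-torsor, and the monodromy action of $\pi_1(\V_{f,\iota},m_0)$ on it factors through its image $E_{f,\iota,d_0} \subset S_{f,\iota}$ acting by translations. The connected components of $\Def_{f,\iota}$ are in bijection with the orbits of this monodromy action on the fiber; since each component covers the connected base, it meets the fiber, and the number of components is exactly the index $[S_{f,\iota}:E_{f,\iota,d_0}]$. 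The hypothesis forces this index to be $1$, so $\Def_{f,\iota}$ is connected.

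There is no serious obstacle here: the only point requiring a moment of care is verifying that each connected component of $\Def_{f,\iota}$ surjects onto $\V_{f,\iota}$ (so that the component count really equals the orbit count of monodromy on a single fiber), which holds because the image of a component under $\rho_{\mathcal D}$ is both open (covering map) and closed (its complement is a union of images of other components, each open), hence all of the connected space $\V_{f,\iota}$.
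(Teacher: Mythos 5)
Your argument is correct and is exactly the covering-space bookkeeping the paper intends: the corollary is stated without proof precisely because it follows from the two preceding propositions (regularity of $\rho_{\mathcal D}$ with deck group $S_{f,\iota}$, and $E_{f,\iota,d_0}$ as the stabilizer of the component through $d_0$) in the way you describe. The component count $[S_{f,\iota}:E_{f,\iota,d_0}]$ and the open-and-closed argument for surjectivity of each component onto the connected base are the right details to check, and you checked them.
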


In the case when $(f,\iota)$ is associated to an element of $\Per_4(0)^*$, $E_{f,\iota,d_0}$ has infinite index in $S_{f,\iota}$ (\cite{HK:rational}, Proposition 2.11).

\subsection{Augmented spaces}\label{augmented-sec}  
By a {\it rational curve} $\sC$ we mean a nodal curve with the following properties
\begin{enumerate}[\hspace{\parindent}(a)]
\item the irreducible components of $\sC$ are isomorphic to $\P^1$, and
\item the fundamental group of $\sC$ is trivial.
\end{enumerate}
A {\it pre-stable rational curve} $(\sC,\sA)$ is a rational curve $\sC$ together with a finite set $\sA$ contained in the complement of the nodes of $\sC$.
The set of nodes of $\sC$ union the points of $\sA$ form 
 the {\it distinguished points} of $\sC$.
A {\it stable rational curve} is a pre-stable rational curve with the following additional property:
\begin{enumerate}[\hspace{\parindent}(c)]
\item the number of distinguished points on each irreducible component of $\sC$ is greater than or equal to $3$.
\end{enumerate}

For each component $C$ of a pre-stable rational curve $(\sC,\sA)$ there are three possibilities:
\begin{enumerate}
\item $C$ is stable, i.e., it contains at least 3 distinguished points;
\item $C$ is unstable, and contains two nodes; or
\item $C$ is unstable, and contains one node and zero or one point in $\sA$.
\end{enumerate}
Let $\Sigma^{\mbox{pre}}_{(S^2,A)}$ be the set of pre-stable rational curves $(\sC,\sA)$ with a bijection
$A \rightarrow \sA$,
and let $\Sigma_{(S^2,A)} \subset\Sigma^{\mbox{pre}}_{(S^2,A)}$ be the space of stable rational curves.
Define a map
$$
\mathfrak s : \Sigma^{\mbox{pre}}_{(S^2,A)} \rightarrow \Sigma_{(S^2,A)}
$$
sending $(\sC,\sA)$ to the result of contracting components of $\sC$ using the following
rule:
in case (1) leave $C$ alone; and in case (2)  and (3) contract $C$ to a point.  If $C$ contains a point of $\sA$, then mark the 
image of the contraction by that point.
This map is well-defined since in case (2) there is at most one point of $\sA$ in $C$.  We call $\mathfrak s$ the {\it stabilization map}.

A {\it marking} of a pre-stable rational curve $(\sC,\sA) \in \Sigma^{\mbox{pre}}_{(S^2,A)}$ is a quotient map
$$
\phi : (S^2,A) \rightarrow (\sC,\sA),
$$
such that $\phi$ is a homeomorphism when restricted to $S^2 \setminus \gamma$ for some multi-curve $\gamma \subset S^2 \setminus A$,
$\phi$ restricts to a  bijection $A \rightarrow \sA$, and the components
of $\gamma$ are in one-to-one correspondence with the nodes of $\sC$.    The curve $\gamma$ is called the {\it contracting curve} for $\phi$.
We consider two markings {\it equivalent} if they are the same up to post-composition by automorphisms 
of $\sC$ and pre-composition by homeomorphisms $(S^2,A) \rightarrow (S^2,A)$ that are isotopic to the identity rel $A$.

The collection of markings of pre-stable and stable rational curves by $(S^2,A)$ is denoted by $\AugTeich^{\mbox{pre}}_{(S^2,A)}$ and
 $\AugTeich_{(S^2,A)}$, respectively.
 Post-composition by $\mathfrak s$ defines a surjection 
 $$
\AugTeich^{\mbox{pre}}_{(S^2,A)} \rightarrow \AugTeich_{(S^2,A)}.
$$
The space $\AugTeich_{(S^2,A)}$ is called the {\it augmented Teichm\"uller space} of $(S^2,A)$.
There is a natural topology on $\AugTeich_{(S^2,A)}$ such that points on
$\AugTeich_{(S^2,A)} \setminus \Teich_{(S^2,A)}$ are the limits of sequences points on $\Teich_{(S^2,A)}$ for which the length of  $\gamma$ tends to zero (see \cite{Bers:ATS} for more precise definitions).

\begin{remark}\label{AugTeichAuts-rem}
The mapping class group $\Mod(S^2,A)$ extends to actions on $\AugTeich^{\mbox{pre}}_{(S^2,A)}$ and $\AugTeich_{(S^2,A)}$.    For points in $\AugTeich^{\mbox{pre}}$ the stabilizer contains a copy of the group of
M\"obius transformations.
Given a point in $\AugTeich_{(S^2,A)}$, the stabilitzer is the free abelian group of mapping classes generated by Dehn twists along
the components of the contracting curve.  
\end{remark}

The action of $\Mod(S^2,A)$ on $\AugTeich_{(S^2,A)}$ defines a branched covering
$$
\AugTeich_{(S^2,A)} \rightarrow \AugModuli_{(S^2,A)}
$$
where $\AugModuli_{(S^2,A)}$ is the space of inclusions
$$
A \hookrightarrow \sC
$$
of $A$ into a rational curve $\sC$
up to holomorphic automorphism of $\sC$ that do not permute components (cf. \cite{DMcompactification}).

We now define  the augmented Teichm\"uller and moduli spaces of $f$.
A rational map $F : (\sC_U,\sA) \rightarrow (\sC_L,\sB)$ is {\it pre-admissible} if
\begin{enumerate}[\hspace{\parindent}(a)]
\item $F$ defines a surjective map from $\sC_U$ to $\sC_L$ of generically constant degree that maps nodes to nodes;
\item locally near each node of $\sC_U$, $F$ has generically constant degree; and
\item $(\sC_L,\sB)$ is a stable rational curve and  $(\sC_U,\sA)$ is pre-stable.
\end{enumerate}
We say that $F$ is {\it admissible} if in addition 
\begin{enumerate}[\hspace{\parindent} (b)]
\item $(\sC_U,\sA)$ is stable.
\end{enumerate}
The {\it augmented Teichm\"uller space} $\AugTeich_f$ for $f$  is the collection of holomorphic markings
\begin{eqnarray*}
\xymatrix {
(S^2,A) \ar[r]^\psi\ar[d]_f &(\sC_U,\sA)\ar[d]^F\\
(S^2,B) \ar[r]^\phi & (\sC_L,\sB)
}
\end{eqnarray*}
where the horizontal maps are markings in $\AugTeich_{(S^2,A)}$ and $\AugTeich_{(S^2,B)}$ respectively, and $F$ is a pre-admissible covering.
Here, as in the definition of $\Teich_f$, we take $(\phi,\psi,F)$ up to the natural equivalences.

With this definition, the projection $\widetilde q : \AugTeich_f \rightarrow \AugTeich_{(S^2,B)}$ defines an isomorphism.
Let $\widetilde p_U, \widetilde p_L : \AugTeich_f \rightarrow \AugTeich_{(S^2,A)}$ be defined by
\begin{eqnarray*}
\widetilde p_L (\phi,\psi,F) &=& \mathfrak s(\phi)\\
\widetilde p_U(\phi,\psi,F)  &=& \mathfrak s(\psi)
\end{eqnarray*}

\begin{proposition} The subgroup of liftables $L_f \subset \Mod(S^2,B)$ extends to an action on $\AugTeich_{f}$.
\end{proposition}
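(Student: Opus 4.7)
The plan is to assemble the already-extended $\Mod(S^2,A)$- and $\Mod(S^2,B)$-actions on $\AugTeich_{(S^2,A)}$ and $\AugTeich_{(S^2,B)}$ (cf.\ Remark~\ref{AugTeichAuts-rem}) into an action on triples using the uniquely defined lifting homomorphism $f^\sharp\colon L_f \to \Mod(S^2,A)$. For $h \in L_f$ with $h' := f^\sharp h$, I would set
$$
h \cdot (\phi,\psi,F) := (h \cdot \phi,\ h' \cdot \psi,\ F),
$$
where the right-hand actions are the given augmented ones; concretely, $h \cdot \phi$ is represented by $\phi \circ \widetilde h^{-1}$ for any representative $\widetilde h$ of $h$.

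The first step is to verify that the output is again an element of $\AugTeich_f$. Since $F$ is unchanged, it remains pre-admissible. Choosing representatives $\widetilde h, \widetilde h'$ with $\widetilde h \circ f = f \circ \widetilde h'$ on the nose---possible precisely because $h' = f^\sharp h$---one computes
$$
F \circ (\psi \circ \widetilde h'^{-1}) = \phi \circ f \circ \widetilde h'^{-1} = \phi \circ \widetilde h^{-1} \circ f = (\phi \circ \widetilde h^{-1}) \circ f,
$$
so the square defining $\AugTeich_f$ is preserved.

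Next, I would check the group-action axioms. Independence of the choice of representative of $h$ is inherited from the component actions; independence of the chosen representative of $(\phi,\psi,F)$ within its equivalence class follows because that equivalence involves pre-composition by isotopies rel $A,B$ and post-composition by automorphisms of the rational curves $\sC_U, \sC_L$, both of which commute up to equivalence with right-multiplication by $\widetilde h^{-1}, \widetilde h'^{-1}$. The identity acts trivially, and multiplicativity of the action reduces to $f^\sharp(h_1 h_2) = f^\sharp(h_1)\,f^\sharp(h_2)$, a direct consequence of the uniqueness of the lift.

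The main obstacle is that, unlike in the classical setting, the $\Mod(S^2,B)$-action on $\AugTeich_{(S^2,B)}$ is no longer free: each boundary stabilizer contains the free abelian group of Dehn twists along the contracting multicurve $\gamma$ of $\phi$. For the recipe to descend to a genuine action on $\AugTeich_f$, one must confirm that when $h$ is such a twist, its lift $h' = f^\sharp h$ in turn lies in the stabilizer of the class of $\psi$. This reduces to identifying $f^{-1}(\gamma)$, up to isotopy rel $A$, with the contracting multicurve of $\psi$---a compatibility built into the pre-admissibility requirement that $F$ send nodes to nodes with the correct local degrees. Articulating this carefully is where the real content of the proposition lies.
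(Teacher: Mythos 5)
Your construction is exactly the paper's --- $h\cdot(\phi,\psi,F)=(\phi\circ h,\ \psi\circ f^\sharp h,\ F)$, verified by the commuting square coming from representatives with $\widetilde h\circ f=f\circ\widetilde h'$ --- so the approach is the same and the proof is correct. The one point you defer at the end (that $f^{-1}(\gamma)$ is, up to isotopy rel $A$, the contracting multicurve of $\psi$ and transforms compatibly) is precisely what the paper supplies in one line: since $f$ is unbranched outside $B$, $f^{-1}(\gamma)$ is a multicurve on $S^2\setminus A$, and liftability gives $f^\sharp h(f^{-1}(\gamma))=f^{-1}(h(\gamma))$, an identity your on-the-nose representatives already deliver; the non-freeness of the boundary action that you flag as the ``main obstacle'' is not in fact an obstruction to defining the action.
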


\begin{proof} Let $\phi : (S^2,B) \rightarrow (\sC_L,\sB)$ be an element of $\AugTeich_{(S^2,B)}$, and let $\gamma$ be the contracting multi-curve.
Since $f$ is unbranched outside of $B$, it follows that  $f^{-1}(\gamma)$ is a multi-curve on $S^2 \setminus A$, and
since $h$ is liftable
$$
f^\sharp h(f^{-1}(\gamma)) = f^{-1}(h(\gamma)).
$$
Thus $h \in L_f$ takes $(\phi,\psi,F)$ to $(\phi \circ h, \psi \circ f^\sharp h, F)$ where $\phi \circ h$ and $ \psi \circ f^\sharp h$
contract the curves $h(\gamma)$ and $f^{-1}(h(\gamma))$ respectively.
\end{proof}

Let $\AugModuli_f = \AugTeich_f/L_f$.  Then the points of $\AugModuli_f$ are defined by diagrams
$$
\xymatrix{
A\ar[d]_{f|_A}\ar@{^(->}[r]^i&\sC_U\ar[d]^F\\
B\ar@{^(->}[r]^j &\sC_L
}
$$
where $(\sC_L, \sB)$ is stable, $(\sC_U,\sA)$ is pre-stable, and  the inclusions $i,j$ 
are defined up to holomorphic automorphisms of $\sC_U$ and $\sC_L$.  
We denote an element by $(i,j,F)$.
Let $\overline p_U, \overline p_L : \AugModuli_f \rightarrow \AugModuli_{(S^2,A)}$ be defined by
\begin{eqnarray*}
\overline p_L ([\phi,\psi,F]) &=& \mathfrak s\circ j\\
\overline p_U([\phi,\psi,F])  &=& \mathfrak s\circ i
\end{eqnarray*}
While the action of $L_f$ on $\Teich_f$ has no fixed points and the quotient map $\Teich_f \rightarrow \Moduli_f$ is a covering,
the quotient map $\AugTeich_f \rightarrow \AugModuli_f$ can have branch points.

Summarizing, we have a commutative diagram of augmented spaces:
$$
\xymatrix{
&\AugTeich_{f} \ar[dl]_{\widetilde q}\ar[dr]^{\widetilde p_U}\ar[dd]^{\widetilde\rho}\\
\AugTeich_{(S^2,B)}\ar[dd]_{/ \Mod(S^2,B)}\ar[dr]^{/L_f}&&\AugTeich_{(S^2,A)}\ar[dd]^{/ \Mod(S^2,A)}\\
&\AugModuli_{f} \ar[dl]_{\overline q}\ar[dr]^{\overline p_U}\\
\AugModuli_{(S^2,B)} && \AugModuli_{(S^2,A)},
}
$$
The augmented deformation space is defined to be the equalizer
$$
\AugDef_{f,\iota} = \Eq(\widetilde p_U,\widetilde p_L)
$$
and contains the deformation space $\Def_{f,\iota}$.  Let $\AugV_{f,\iota} = \widetilde \rho(\AugDef_{f,\iota})$.
Then we have
$$
\AugV_{f,\iota} = \Eq(\overline p_U,\overline p_L),
$$

\subsection{Stabilizer of augmented deformation space}\label{stab-sec}
In this section, we give a necessary and sufficient condition for $\AugDef_f$ to be connected.  Unlike in the case
for Corollary~\ref{Defconsuff-cor}, $\AugDef_f \rightarrow \V_f$ is not an unbranched covering.  To get around this
we use the notion of regular neighborhoods.
 Recall that, for a simplicial subcomplex $V$ embedded in a manifold $X$, 
a {\it regular neighborhood} $N(V)$ of $V$ is an open subset of $X$ containing $V$ that 
has a deformation retract to $V$. 
 
\begin{proposition} The stabilizer in $L_f$ of $\AugDef_{f,\iota}$ equals $S_{f,\iota}$, that is, if $g \in L_f$ is such that
$g(\alpha) = \alpha$ for all $\alpha \in \AugDef_{f,\iota}$, then $g \in S_{f,\iota}$.
\end{proposition}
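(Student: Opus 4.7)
The plan is to establish both inclusions between $S_{f,\iota}$ and the stabilizer, reading the stabilizer as the subgroup of $L_f$ that preserves $\AugDef_{f,\iota}$ under the extended action on $\AugTeich_f$.  The essential input is the non-augmented analogue, Proposition~2.5.

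For the forward inclusion $S_{f,\iota}\subseteq\mathrm{Stab}_{L_f}(\AugDef_{f,\iota})$, take $g\in S_{f,\iota}$ and $\alpha=(\phi,\psi,F)\in\AugDef_{f,\iota}$.  The extended action produces $g\cdot\alpha=(\phi\circ g,\psi\circ f^\sharp g,F)$, and the equalizer condition on $\alpha$ says $\mathfrak s(\phi\circ\iota)=\mathfrak s(\psi)$ in $\AugTeich_{(S^2,A)}$.  Combining the defining relation $f^\sharp g=\iota^\sharp g$ for $g\in S_{f,\iota}$ with the tautological isotopy $g\circ\iota\simeq\iota\circ\iota^\sharp g$ rel $A$, I would compute
$$
\widetilde p_L(g\cdot\alpha)=\mathfrak s(\phi\circ g\circ\iota)=\mathfrak s(\phi\circ\iota)\cdot\iota^\sharp g=\mathfrak s(\psi)\cdot f^\sharp g=\mathfrak s(\psi\circ f^\sharp g)=\widetilde p_U(g\cdot\alpha),
$$
so $g\cdot\alpha\in\AugDef_{f,\iota}$.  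For the reverse inclusion, observe that the $L_f$-action on $\AugTeich_f$ restricts to a free action on the open, dense, $L_f$-invariant subspace $\Teich_f$.  Hence if $g\in L_f$ sends $\AugDef_{f,\iota}$ to itself, then it sends $\Def_{f,\iota}=\AugDef_{f,\iota}\cap\Teich_f$ to itself, and Proposition~2.5 forces $g\in S_{f,\iota}$.  (If one instead reads the hypothesis $g(\alpha)=\alpha$ literally as pointwise fixation, the same dense-and-free argument applied to any $\alpha\in\Def_{f,\iota}$ forces $g=1$, which trivially lies in $S_{f,\iota}$.)

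The main obstacle is the naturality bookkeeping in the forward direction: one must verify that the stabilization map $\mathfrak s$ is equivariant with respect to post-composition by $g$ and by $\iota^\sharp g$ on every boundary stratum of $\AugTeich$, including those boundary points at which the $L_f$-action has nontrivial Dehn-twist stabilizers.  These are essentially the same naturality checks used to extend the $L_f$-action from $\Teich_f$ to $\AugTeich_f$ in the preceding proposition, and once that extension is in place the calculation above is formal.  Regular neighborhoods, introduced just before the statement, are not needed for this proposition itself; they enter in the subsequent analysis relating the component structure of $\AugDef_{f,\iota}$ to that of $\AugV_{f,\iota}$, where the covering $\AugDef_{f,\iota}\to\AugV_{f,\iota}$ is genuinely branched.
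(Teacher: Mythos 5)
Your proposal is correct and follows essentially the same route as the paper: the reverse inclusion by restricting a set-wise stabilizer of $\AugDef_{f,\iota}$ to $\Def_{f,\iota}=\AugDef_{f,\iota}\cap\Teich_f$ and invoking the non-augmented Proposition~2.5, and the forward inclusion by the same $f^\sharp g=\iota^\sharp g$ computation showing $\widetilde f^*(g[\phi])=\widetilde\iota^*(g[\phi])$. Your parenthetical remark about the literal pointwise reading of the hypothesis is a fair observation; the paper's proof, like yours, treats the stabilizer set-wise.
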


\begin{proof} The stabilizer in $L_f$ of $\AugDef_{f,\iota}$ must be contained in $S_{f,\iota}$, since $\Def_{f,\iota} \subset \AugDef_{f,\iota}$.
Let $h \in S_{f,\iota}$ and let $(\phi,\psi, F) \in \AugDef_{f,\iota}$.  Then by definition $f^\sharp h = i^\sharp h$.  Thus,
we have a commutative diagram
$$
\xymatrix{
(S^2,A) \ar[r]^{f^\sharp h}\ar[d]_f& (S^2,A) \ar[d]^f\ar[r]^\psi& (\sC_U,\sA)\ar[d]^F\\
(S^2,B) \ar[r]^h &(S^2,B) \ar[r]^\phi & (\sC_L,\sB)
}
$$
and we have
$$
\widetilde f^* (h([\phi]))  = [\psi \circ f^\sharp h] =  [\psi \circ\iota^\sharp h] =\widetilde \iota^*(h([\phi])).
$$
Thus $h$ stabilizes $\AugDef_{f,\iota}$.  
\end{proof}

\begin{remark}  Conversely, one can ask whether if $g \in L_f$ and $g(\alpha) = \alpha$ for some $\alpha \in \AugDef_{f,\iota}$, then does it follow that
$g \in S_{f,\iota}$?  This is not true in general, since points in the boundary of $\AugDef_{f,\iota}$ have extra automorphisms that need not be
in $S_{f,\iota}$ (see Remark~\ref{AugTeichAuts-rem}).
\end{remark}

\begin{proposition} \label{suff-prop} Suppose there is a  connected quasi-projective variety
$X$ with $\V_{f,\iota} \subset X \subset \AugV_{f,\iota}$ such that
$X$ has a regular neighborhood $N(X) \subset \AugModuli_f$ with the properties
\begin{enumerate}
\item $N(X) \cap \Moduli_{f}$ is connected,
and 
\item the image
of the homomorphism induced by inclusion
$$
\pi_1(N(X) \cap \Moduli_f,m_0) \rightarrow \pi_1(\Moduli_f,m_0) 
$$
contains $S_{f,\iota}$.  
\end{enumerate}
Then $\Def_{f,\iota}$ is contained in a connected component of $\AugDef_{f,\iota}$.
\end{proposition}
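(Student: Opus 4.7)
The plan is to reduce to showing that, for every $g \in S_{f,\iota}$, the points $d_0$ and $g\cdot d_0$ can be joined by a path in $\AugDef_{f,\iota}$. Since the connected components of $\Def_{f,\iota}$ are permuted transitively by $S_{f,\iota}/E_{f,\iota,d_0}$, producing such paths places all of $\Def_{f,\iota}$ in a single connected component of $\AugDef_{f,\iota}$.

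Fix $g \in S_{f,\iota}$. By hypothesis (2) the image of $\pi_1(N(X) \cap \Moduli_f, m_0) \to L_f$ contains $S_{f,\iota}$, so there is a loop $\gamma$ in $N(X) \cap \Moduli_f$ based at $m_0$ whose holonomy through the unbranched covering $\Teich_f \to \Moduli_f$ carries $d_0$ to $g \cdot d_0$. Path-lifting $\gamma$ starting at $d_0$ produces a path $\widetilde\gamma$ in $\widetilde\rho^{-1}(N(X) \cap \Moduli_f) \subset \AugTeich_f$ from $d_0$ to $g \cdot d_0$; a priori this path lies only in $\Teich_f$, not in $\AugDef_{f,\iota}$.

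To push $\widetilde\gamma$ into $\widetilde\rho^{-1}(X)$ I would use the deformation retraction $H_s\colon N(X) \to N(X)$ with $H_0 = \mathrm{id}$, $H_1(N(X)) \subseteq X$, and $H_s\vert_X = \mathrm{id}$. Since $m_0 \in \V_{f,\iota} \subset X$ is fixed by each $H_s$, the composition $H_s \circ \gamma$ is a homotopy through loops at $m_0$ in $N(X)$ ending at a loop in $X$. Lifting this homotopy through $\widetilde\rho$ starting from $\widetilde\gamma$ yields a family of paths $\widetilde H_s$ whose endpoints remain at $d_0$ and $g \cdot d_0$, because the $\widetilde\rho$-fibers over $m_0$ are discrete. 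At $s = 1$ this produces a path $\widetilde\eta$ from $d_0$ to $g \cdot d_0$ whose projection to $\AugModuli_f$ lies in $X \subset \AugV_{f,\iota}$.

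The main obstacle is to show that $\widetilde\eta$ lies in $\AugDef_{f,\iota}$ rather than straying into some translate $h \cdot \AugDef_{f,\iota}$ with $h \in L_f \setminus S_{f,\iota}$. Set-theoretically, $\widetilde\rho^{-1}(\AugV_{f,\iota})$ is the union $\bigcup_{[h] \in L_f/S_{f,\iota}} h \cdot \AugDef_{f,\iota}$, and away from the branch locus of $\widetilde\rho$ these translates are pairwise disjoint. Since $\widetilde\eta(0) = d_0 \in \AugDef_{f,\iota}$ and the path is continuous, the only way for $\widetilde\eta$ to switch translates would be at a boundary point with extra stabilizer. By Remark~\ref{AugTeichAuts-rem} these extra stabilizers are generated by Dehn twists along the contracting multi-curve, and the key technical point, which I expect to be the hard part of the argument, is to verify that such local identifications respect the equalizer condition $\widetilde p_U = \widetilde p_L$ defining $\AugDef_{f,\iota}$, so that $\widetilde\eta$ cannot be diverted out of $\AugDef_{f,\iota}$. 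Granting this, $\widetilde\eta$ is the desired path from $d_0$ to $g \cdot d_0$ inside $\AugDef_{f,\iota}$.
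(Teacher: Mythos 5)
Your argument is essentially the paper's proof recast at the level of paths rather than components. The paper takes $U$ to be the connected component of $\widetilde\rho^{-1}(N(X))$ containing $d_0$ and $Y_0$ the component of $\widetilde\rho^{-1}(X)$ containing $d_0$, lifts the retraction $N(X)\to X$ to a retraction of $U$ onto $Y_0$, and uses hypothesis (2) to conclude that $S_{f,\iota}$ preserves $U$ and hence $Y_0$, giving $\Def_{f,\iota}\subset Y_0\subset \AugDef_{f,\iota}$; your loop-lifting followed by pushing along $H_s$ is exactly the path-level version of lifting that retraction, and your appeal to discreteness of the fiber over $m_0$ plays the same role as the paper's observation that the retraction of $U$ must land on $Y_0$. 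The one step you explicitly defer --- that the resulting path over $X$ cannot leak from $\AugDef_{f,\iota}$ into a translate $h\cdot\AugDef_{f,\iota}$ with $h\notin S_{f,\iota}$ at a boundary point where $\widetilde\rho$ is branched --- is precisely the final inclusion $Y_0\subset\AugDef_{f,\iota}$, which the paper asserts in its last display without further justification; so you have not missed an ingredient that the paper supplies, you have only made explicit a point the paper leaves implicit. (The same remark applies to your homotopy-lifting step through the branched covering, which the paper handles equally tersely by declaring that $U$ ``has a corresponding retraction.'') If you want to close your deferred step, note that $\AugDef_{f,\iota}$ is closed, being the equalizer of two continuous maps into a Hausdorff space, so it suffices to show that $\AugDef_{f,\iota}\cap\widetilde\rho^{-1}(X)$ is also open in $\widetilde\rho^{-1}(X)$ near the finitely many boundary strata met by $X$; there the extra stabilizers are the Dehn twists of Remark~\ref{AugTeichAuts-rem} along the contracting multi-curve, and one must check that these identifications are compatible with the condition $\widetilde p_U=\widetilde p_L$, exactly as you say.
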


\begin{proof}  Let $Y_0 \subset \AugTeich_f$ be the connected component of the preimage of $X$ in $\AugTeich_f$ containing $d_0$.
Let 
$U$ be the connected component of the preimage of $N(X)$ in $\AugTeich_f$ containing $d_0$.  Then 
since $N(X)$ has a retract to $X$, the set $U$ has a corresponding
retraction to a component of $\widetilde \rho^{-1}(X)$.  This component is necessarily $Y_0$, since
$Y_0 \cap U \neq \emptyset$. 
Since the image of $\pi_1(N(X) \cap \Moduli_f,m_0)$ in $L_f = \pi_1(\Moduli_f,m_0)$ contains $S_{f,\iota}$, it follows that the action of $S_{f,\iota}$ on  $\AugTeich_f$ preserves $U$ and hence $Y_0$.
Thus, we have
$$
\Def_{f,\iota} \subset Y_0 \subset \AugDef_{f,\iota}.
$$
\end{proof}

\section{Blowups and topology of curve complements}\label{topology-sec}

In this section we study the topology of surface/curve pairs and the effect of blowups  (see for example \cite{Fulton:AC} or \cite{Hartshorne}
for a review of elementary blowup theory for surfaces).

\subsection{Regular neighborhoods of algebraic curves on surfaces}\label{reg-sec}

Let $X$ be a smooth complex projective surface, and let $V \subset X$ be a pure codimensional one subvariety.
We first observe that we may assume that $V$ is a finite union of smooth curves with normal crossings.
 Let $Q$ be the set of singular points on $V$.  Then $V \setminus Q$ is a finite union of smoothly embedded
 punctured Riemann surfaces in $X \setminus Q$.
 By successively blowing up $X$ at the points of $Q$ (and at points of the preimages of $Q$),
it is possible to obtain a new projective surface $\widetilde X$ and a surjective morphism $\sigma: \widetilde X \rightarrow X$ such that
the preimage (or {\it total transform}) $\widetilde V = \sigma^{-1}(V)$ is a union of smoooth curves with normal crossings.    
That is $(\widetilde X, \widetilde V)$
is locally isomorphic near a point of intersection on $\widetilde V$ to $(\C^2, \{x=0\} \cup \{y=0\})$.

Hereafter in this section we assume that $V$ has smooth components intersecting in normal crossing.
As before, let $Q$ be the set of intersections of $V$, and let $V_1,\dots, V_k$ be the irreducible components of $V$.
Since each $V_i$ is smooth there is an embedded tubular neighborhood $T(V_i) \subset X$ so that
for $i\neq j$, $T(V_i)$ only intersects $T(V_j)$ near points in $Q$ where $V_i$ and $V_j$ intersect,
and if $V_i$ and $V_j$ intersect at $q$, then 
$$
N(q) = T(V_i) \cap T(V_j)
$$
is a neighborhood of $q$ so that $(N(q),V,q)$ is homeomorphic to 
$$
(\{|x|<1\} \times \{|y|<1\}, \{x=0\} \cup \{y=0\}, (0,0)).
$$

For $i=1,\dots,k$, let
$$
V_i^c = V_i \setminus \bigcup_{q \in Q} N(q)
$$
and let $T(V_i^c)$ be the tubular neighborhood given by 
$$
T(V_i^c) = T(V_i)  \setminus \bigcup_{q \in Q} N(q).
$$
Let $T(V) = \bigcup_{i=1}^k T(V_i^c)$, called the {\it tubular neighborhood} of $V$.

Let $S(V_i^c)$ be the circle bundle over $V_i^c$ contained in the boundary of $T(V_i^c)$.  Then $S(V_i^c)$ is an oriented 3-manifold
with torus boundary components corresponding to the intersections of $V_i$ with other components of $V$.  In particular, if $V_i$ is
isomorphic to $\P^1$, then $S(V_i^c)$ is the complement of thickened Hopf links in the 3-sphere.
The {\it boundary manifold} of $V$ is given by
$$
S(V) = \bigcup_{i=1}^k S(V_i^c).
$$
This manifold and its embedding in $X \setminus V$ is uniquely determined up to homeomorphisms of $X$ that are isotopic to the identity rel $V$.

\begin{lemma} \label{contract-lem} The punctured tubular neighborhood $T(V) \setminus V$ has a deformation retraction to $S(V)$.
\end{lemma}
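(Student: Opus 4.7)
The strategy is to exhibit $T(V) \setminus V$ as a disjoint union of punctured $2$-disk bundles and then apply a fiberwise radial retraction onto the boundary circle bundle of each piece.

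First I would verify that the sets $T(V_i^c)$ are pairwise disjoint and that $V \cap T(V_i^c) = V_i^c$. By the standing hypothesis on the tubular neighborhoods, $T(V_i) \cap T(V_j) \subset \bigcup_{q \in Q} N(q)$ for $i \neq j$; since every $N(q)$ has been excised from both $T(V_i^c)$ and $T(V_j^c)$, the two are disjoint. The same argument shows $V_j \cap T(V_i^c) = \emptyset$ for $j \neq i$, while $V_i \cap T(V_i^c) = V_i \setminus \bigcup_q N(q) = V_i^c$. Hence
$$
T(V) \setminus V \;=\; \bigsqcup_{i=1}^k \bigl( T(V_i^c) \setminus V_i^c \bigr).
$$

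Next I would invoke the tubular neighborhood theorem: since $V_i$ is a smoothly embedded complex curve in the complex surface $X$, its normal bundle is a complex line bundle and, after fixing a Hermitian metric, $T(V_i)$ is diffeomorphic to its closed unit disk bundle. Restricting the bundle projection to $V_i^c$ realizes $T(V_i^c)$ as a closed $2$-disk bundle $\pi_i : T(V_i^c) \to V_i^c$ with zero section $V_i^c$ and radial boundary the circle bundle $S(V_i^c)$. In each fiber disk I would then use the standard strong deformation retraction $H_i(v,t) = v\cdot\bigl(1 - t + t/|v|\bigr)$ of the punctured closed disk onto its boundary circle; this assembles into a strong deformation retraction of $T(V_i^c) \setminus V_i^c$ onto $S(V_i^c)$. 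By the disjointness step, the $H_i$ patch trivially into a strong deformation retraction of $T(V) \setminus V$ onto $S(V)$.

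The only subtle point is ensuring, in the disk-bundle step, that after excising the bidisks $N(q)$ from $T(V_i)$ what remains is still a genuine disk bundle over $V_i^c$ with radial boundary equal to $S(V_i^c)$, rather than one that has picked up extra boundary from the cuts. This reduces to the bidisk local model at each crossing $q \in V_i \cap V_j$: in coordinates in which $V_i = \{y=0\}$, $V_j = \{x=0\}$, and $N(q)$ is the unit bidisk, one checks directly that $T(V_i) \setminus N(q)$ fibers cleanly by the $y$-disks over $V_i^c$ near $q$, with $S(V_i^c)$ as the radial part of the boundary. Once this local verification is in hand, the global assembly is immediate.
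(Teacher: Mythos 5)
Your decomposition $T(V)\setminus V = \bigsqcup_i \bigl( T(V_i^c)\setminus V_i^c \bigr)$ takes the displayed definition $T(V)=\bigcup_i T(V_i^c)$ at face value, and under that literal reading your fiberwise radial retraction is fine as far as it goes. But that reading cannot be the intended one: it would make $T(V)$ fail to contain the crossing points $Q\subset V$ (so $T(V)$ is not a neighborhood of $V$, and indeed $V\not\subset T(V)$), it would disconnect $T(V)\setminus V$ at every crossing even when $V$ is connected, and it is incompatible with Remark~\ref{graphmanifold-rem} and with the later use of $T(V)$ as a regular neighborhood in Lemma~\ref{surj-lem} and Corollary~\ref{connectedness-cor}, where the vertex manifolds $S(V_i^c)$ are glued along common boundary tori. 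The tubular neighborhood is meant to include the bidisks $N(q)$, and essentially all of the content of the lemma is concentrated there: away from $Q$ the retraction is the routine fiberwise one you describe, but at each $q\in Q$ one must retract the punctured bidisk $N(q)\setminus V\cong (D\setminus\{0\})\times(D\setminus\{0\})$ onto the corner torus $\{|x|=1\}\times\{|y|=1\}$ --- the torus along which $S(V_i^c)$ and $S(V_j^c)$ are glued --- and check that this retraction restricts on $\{|x|=1\}\times\{|y|<1\}$ and $\{|x|<1\}\times\{|y|=1\}$ to the fiberwise radial retractions of the two adjacent pieces. The paper does exactly this with the explicit homotopy $((x,y),t)\mapsto \bigl(x/(1+t(|x|-1)),\, y/(1+t(|y|-1))\bigr)$, which fixes the corner torus pointwise, is radial in each coordinate separately, and therefore glues with the retractions on $T(V_i^c)\setminus V_i^c$ and $T(V_j^c)\setminus V_j^c$. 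This corner step is absent from your argument, so as written the proof does not establish the retraction for the space the lemma is actually about.

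A secondary point: once you pass to closed disk bundles, the pieces $T(V_i^c)$ are no longer pairwise disjoint --- adjacent ones meet precisely along the corner tori --- so the assertion that the homotopies $H_i$ ``patch trivially'' needs the (easy) observation that each $H_i$ fixes the shared torus pointwise; the disjointness argument you give does not cover this.
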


\begin{proof}  For each $i$, $T(V_i^c) \setminus V_i^c$ has a deformation retract to $S(V_i^c)$ corresponding from the retraction of a punctured disk
to its boundary circle.  Thus, we have only to consider what happens near the intersection points $q \in Q$.  In $N(q)$ it is enough to show that
$$
\{ |x| < 1\} \times \{ |y| < 1\}   \setminus \{x=0\} \cup \{y=0\}
$$
has a deformation retract to $\{|x|=1\} \times \{|y|=1\}$.  Such a retraction is defined by the map
$$
((x,y),t) \mapsto \left (\frac{x}{1 + t(|x|-1)}, \frac{y}{1+t(|y|-1)}\right ).
$$
\end{proof}

\begin{remark}\label{graphmanifold-rem}
By its construction, $S(V)$ is naturally homeomorphic to a  graph manifold (see, for example, \cite{Hemp:Res} for definitions) over the incidence graph 
$\Gamma$ of the components of $V$;
this is the bipartite graph with vertices 
$$
\{v_i \ | \ i=1,\dots k\}
$$
corresponding to the components of $V$, and edges between $v_i$ and $v_j$ for each $q \in Q$ such that $V_i$ and $V_j$ intersect
at $q$.
To each vertex $v_i$  associate 
the manifold $S(V_i^c)$ and to each edge of $\Gamma$ between $v_i$ and $v_j$ associate the common torus boundary component of their associated vertex manifolds.  
\end{remark}

\subsection{Regular neighborhoods and fundamental groups}\label{fund-sec}
In this section we prove an easy
variation of the Lefschetz hyperplane theorem \cite{AF:Lefschetz} and a useful corollary.

\begin{lemma}\label{surj-lem}  Let $\mathfrak p : X \rightarrow \P^1$ be a smooth projective surface fibered over the complex projective line,
and let $V$ be a fiber.
Let  $C \subset X$ be a pure codimension one algebraic subset none of whose components are fibers of $\mathfrak p$.
Then $V$ has a regular neighborhood $N(V)$ so that 
$$
\pi_1(N(V) \setminus C) \rightarrow (X \setminus C)
$$
induced by inclusion is surjective.
\end{lemma}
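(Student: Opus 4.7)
The plan is to set $N(V) := \mathfrak p^{-1}(D)$ for a small enough open disk $D \subset \P^1$ around $b := \mathfrak p(V)$, and to prove the following strengthening that immediately implies the lemma: for any $t_0 \in D \setminus \{b\}$ with $V_{t_0} := \mathfrak p^{-1}(t_0)$ smooth and transverse to every component of $C$, the inclusion
$$V_{t_0} \setminus (V_{t_0} \cap C) \hookrightarrow X \setminus C$$
already induces a surjection on $\pi_1$. Since $V_{t_0} \setminus (V_{t_0} \cap C) \subset N(V) \setminus C$, the surjectivity in the lemma follows immediately.

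First I would choose $D$ so that its closure meets the finite set $T \subset \P^1$ of critical values of $\mathfrak p$ and of the restrictions $\mathfrak p|_{C_i}$ (for irreducible components $C_i$ of $C$) only at $b$. This is possible because the hypothesis that no $C_i$ is a fiber forces each $\mathfrak p|_{C_i}$ to be a dominant morphism of smooth complete curves, hence with only finitely many critical values. Then any $t_0 \in D \setminus \{b\}$ yields a $V_{t_0}$ that is smooth and transverse to $C$.

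The stronger statement then follows by combining three ingredients. (i) For any divisor $C$ in a smooth projective surface $X$, $\pi_1(X \setminus C)$ is generated by loops representing $\pi_1(X)$ together with one meridian per irreducible component of $C$, the standard Zariski description of the fundamental group of a hypersurface complement. (ii) Because $\P^1$ is simply connected and $\mathfrak p: X \to \P^1$ is proper, the inclusion $V_{t_0} \hookrightarrow X$ induces a surjection $\pi_1(V_{t_0}) \twoheadrightarrow \pi_1(X)$; this is a Lefschetz-type fact, deducible from the long exact sequence of the fiber bundle $\mathfrak p: X \setminus \mathfrak p^{-1}(T) \to \P^1 \setminus T$ (Ehresmann's theorem, using finiteness of $T$) together with the observation that meridians of the singular fibers of $\mathfrak p$ generate the kernel of the passage to $\pi_1(X)$ while simultaneously projecting onto a generating set for $\pi_1(\P^1 \setminus T)$. (iii) Each $C_i$ meets $V_{t_0}$ non-trivially, because $\mathfrak p|_{C_i}$ is surjective; by transversality, a small loop in $V_{t_0}$ around any such intersection point represents a meridian of $C_i$ in $X \setminus C$. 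Combining (i)--(iii), the image of $\pi_1(V_{t_0} \setminus (V_{t_0} \cap C))$ in $\pi_1(X \setminus C)$ contains a preimage of $\pi_1(X)$ (via the further surjection $\pi_1(V_{t_0} \setminus (V_{t_0} \cap C)) \twoheadrightarrow \pi_1(V_{t_0})$) and contains all meridians, hence is everything.

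The main obstacle will be ingredient (ii): while classical, the surjection $\pi_1(V_{t_0}) \twoheadrightarrow \pi_1(X)$ requires a careful identification of the monodromy loops of $\mathfrak p$ around singular fibers with meridians that become trivial upon passage to $\pi_1(X)$. The remaining ingredients are immediate consequences of general position and of the choice of $D$.
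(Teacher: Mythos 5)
Your reduction --- take $N(V)=\mathfrak p^{-1}(D)$ for a small disk $D$ around $\mathfrak p(V)$ and show that a nearby smooth fiber $V_{t_0}$, punctured along $C$, already surjects on $\pi_1$ --- is exactly the paper's; the difference lies in how that surjectivity is established. The paper cites the Zariski--Van Kampen theorem for the fiber bundle $X^o=\mathfrak p^{-1}(\P^1\setminus P)\setminus C\to\P^1\setminus P$, which presents $\pi_1(X\setminus C)$ as a quotient of $\pi_1(V_{t_0}\setminus C)$ under the inclusion-induced map. You instead try to re-derive this from your items (i)--(iii), and that derivation has a genuine gap.

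The problem is the final combination step. The correct general fact behind (i) is that $\pi_1(X\setminus C)\to\pi_1(X)$ is surjective with kernel the \emph{normal closure} of the meridians of the $C_i$; it is false in general that $\pi_1(X\setminus C)$ is generated by lifts of generators of $\pi_1(X)$ together with one meridian per component (for Zariski's six-cuspidal sextic $C\subset\P^2$ one has $\pi_1(\P^2\setminus C)\cong\Z/2*\Z/3$ while $\pi_1(\P^2)=1$ and a single meridian generates only a cyclic subgroup). Accordingly, knowing that the image $H$ of $\pi_1(V_{t_0}\setminus C)$ surjects onto $\pi_1(X)$ and contains a meridian around each point of $V_{t_0}\cap C_i$ does not give $H=\pi_1(X\setminus C)$: you would need $H$ to contain \emph{all conjugates} of those meridians, and writing an arbitrary $g$ as $hn$ with $h\in H$ and $n$ in the kernel only yields $gm_ig^{-1}=h(nm_in^{-1})h^{-1}$, which is circular. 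Your phrase ``contains all meridians'' is precisely where the argument silently upgrades ``one meridian per component'' to the needed normal closure. The assertion that the meridians lying on a generic fiber generate everything \emph{is} the content of the Zariski--Van Kampen/Lefschetz-type theorem, not a formal consequence of (i) and (iii); the repair is to invoke that theorem directly, as the paper does. (Your ingredient (ii), which you flagged as the main obstacle, is comparatively unproblematic --- at least when $\mathfrak p$ has no multiple fibers, which is the case in the paper's application.)
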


\begin{proof}  Let $P \subset \P^1$ be the set of points $p$ where $\mathfrak p$ restricted to $C$ drops in cardinality, i.e., at
least one intersection point of $C$ and the fiber above $p$ has higher multiplicity. Let $\mathfrak p^o$
be the restriction of $\mathfrak p$ to $X^o = \mathfrak p^{-1}(\P^1 \setminus P) \setminus C$.  Then
$$
\mathfrak p^o: X^o \rightarrow \P^1 \setminus P
$$
is a fiber bundle, and the Zariski-Van Kampen theorem \cite{Kampen:Fund} \cite{Chen:Van} implies that for a general fiber $V'$ of $\mathfrak p^o$
$$
\pi_1(X \setminus C) \simeq \pi_1(V')/K
$$
where $K$ is the subgroup of $\pi_1(V')$ generated by the relations $\gamma^{-1} \beta (\gamma)$, where
$\beta$ ranges over automorphisms of $\pi_1(V')$ determined by the action of $\pi_1(\P^1 \setminus P)$ on
$V'$.   In particular, the homomorphism 
$$
\pi_1(V') \rightarrow \pi_1(X \setminus C)
$$
is surjective.   Let
$N(V) =  {(\mathfrak p^o)}^{-1}(U)$ where $U$ is a small neighborhood of $\mathfrak p (V)$.
Then $N(\widetilde V) \setminus C$ contains a general fiber of $\mathfrak p^o$, and 
the claim follows.
\end{proof}

\begin{corollary}\label{connectedness-cor}  Let $C \subset \C^2$ be an algebraic curve, and let $V \subset \C^2$ be a line not contained in $C$.
Then we can include $\C^2$ as a Zariski open subset of a smooth projective surface $X$, and find a sequence of blowups
$\sigma : \widetilde X \rightarrow X$
such that 
\begin{enumerate}
\item $\sigma$ is an isomorphism over $\C^2 \setminus C$; and
\item the total transform $\widetilde V$ of the closure of $V$ in $X$
has a regular neighborhood $N(\widetilde V)$ in $\widetilde X$ such that
the map on fundamental groups induced by inclusion
$$
\pi_1(N(\widetilde V) \cap \C^2 \setminus C) \rightarrow \pi_1 (\C^2 \setminus C)
$$
is surjective.  
\end{enumerate}
\end{corollary}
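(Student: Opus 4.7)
The plan is to realize $V$ as a fiber of a pencil after a single blow-up of $\P^2$, and to invoke the Zariski-Van Kampen argument from the proof of Lemma~\ref{surj-lem}. Embed $\C^2 \hookrightarrow X := \P^2$ as the complement of the line at infinity $L_\infty$, let $\overline V, \overline C \subset \P^2$ denote the projective closures, and set $q := \overline V \cap L_\infty$. Let $\sigma: \widetilde X \to \P^2$ be the blow-up at $q$; then $\widetilde X \cong \mathbb F_1$, and projection from $q$ extends to a ruling $\mathfrak p: \widetilde X \to \P^1$ whose fibers are the strict transforms of lines through $q$. Because $q \in L_\infty$ lies outside $\C^2 \supset \C^2 \setminus C$, the map $\sigma$ is an isomorphism over $\C^2 \setminus C$, giving condition~(1).

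For condition~(2), since $\overline V$ is itself a line through $q$ its strict transform $V'$ is a fiber of $\mathfrak p$, and the total transform is $\widetilde V = V' \cup E_q$, where $E_q$ is the exceptional divisor. Writing $L'$ and $C'$ for the strict transforms of $L_\infty$ and $\overline C$ respectively, and setting $D := E_q \cup L' \cup C'$, one has $\C^2 \setminus C \cong \widetilde X \setminus D$, so~(2) reduces to showing
\[
\pi_1(N(\widetilde V) \setminus D) \twoheadrightarrow \pi_1(\widetilde X \setminus D).
\]
Lemma~\ref{surj-lem} does not apply verbatim here, since $D$ contains fiber components of $\mathfrak p$: the fiber $L'$, together with the strict transforms of any line components of $\overline C$ through $q$ (i.e., components of $C$ parallel to $V$).

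To circumvent this I would decompose $D = D^f \cup D^{nf}$ into fiber and non-fiber components and work over the complement of $D^f$. Since $V' \not\subset D$, the value $\mathfrak p(V')$ avoids the finite set $\mathfrak p(D^f) \subset \P^1$; pick a small disk $U \subset \P^1 \setminus \mathfrak p(D^f)$ around $\mathfrak p(V')$ and set $N(V') := \mathfrak p^{-1}(U) \subset N(\widetilde V)$. Restricted to $\widetilde X \setminus D^f$, the ruling $\mathfrak p$ is a fiber bundle over $\P^1 \setminus \mathfrak p(D^f)$ in which $D^{nf}$ has no fiber components, so the Zariski-Van Kampen theorem, applied exactly as in the proof of Lemma~\ref{surj-lem}, yields for generic $u \in U$ a surjection $\pi_1(F_u \setminus D) \twoheadrightarrow \pi_1(\widetilde X \setminus D)$. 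Since this factors through $\pi_1(N(V') \setminus D) \to \pi_1(N(\widetilde V) \setminus D)$, the latter surjects onto $\pi_1(\widetilde X \setminus D) = \pi_1(\C^2 \setminus C)$, as required.

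The main obstacle is the appearance of fiber components in $D$, which is essentially unavoidable under the constraint that $\sigma$ be an isomorphism over $\C^2 \setminus C$: the allowed blow-up points on $\overline V$ lie in $\overline V \cap (L_\infty \cup \overline C)$, and the uniform choice $q = \overline V \cap L_\infty$ automatically forces $L'$ into $D$ as a fiber. The workaround above exploits that $V'$, being a fiber of $\mathfrak p$ distinct from any fiber in $D^f$, is automatically disjoint from them, so one can localize the Zariski-Van Kampen argument to a small disk in $\P^1$ away from $\mathfrak p(D^f)$, where the hypotheses of Lemma~\ref{surj-lem} hold on the restricted fibration.
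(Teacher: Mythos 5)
Your compactification in $\P^2$ and the single blow-up at $q=\overline V\cap L_\infty$ are fine, and you correctly isolate the obstacle: the divisor $D=E_q\cup L'\cup C'$ contains fiber components of the ruling, so Lemma~\ref{surj-lem} does not apply verbatim. But the workaround does not close the gap. The claimed surjection $\pi_1(F_u\setminus D)\twoheadrightarrow\pi_1(\widetilde X\setminus D)$ is false whenever $D^f$ consists of two or more fibers, i.e.\ whenever $C$ has a line component parallel to $V$ (the statement of the Corollary allows this). Since $D^f$ is a union of whole fibers, $\mathfrak p$ restricts to a surjection $\widetilde X\setminus D\to\P^1\setminus\mathfrak p(D^f)$ with connected fibers, so $\mathfrak p_*:\pi_1(\widetilde X\setminus D)\to\pi_1\bigl(\P^1\setminus\mathfrak p(D^f)\bigr)$ is onto; the target is a nontrivial free group as soon as $|\mathfrak p(D^f)|\ge 2$, while $\pi_1(F_u\setminus D)$ lands in its kernel because $F_u$ maps to a point. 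Concretely, for $V=\{x=0\}$ and $C=\{x=5\}$ one gets $D^f=L'\cup C'$, $F_u\setminus D\cong\C$ simply connected, yet $\pi_1(\C^2\setminus C)\cong\Z$. The Zariski--van Kampen mechanism kills a base loop $\gamma_p$ only when the punctured fiber over $p$ is put back into the space; the fibers in $D^f$ are never put back, so the corresponding base loops survive and are invisible to any single fiber $F_u$. (This is exactly why Lemma~\ref{surj-lem} carries the hypothesis that no component of $C$ is a fiber; deleting the single fiber $L'$ alone is harmless only because $\P^1$ minus one point is simply connected.)

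The good news is that your choice of $\P^2$ makes the statement rescuable, but only by using all of $N(\widetilde V)$ rather than just $N(V')=\mathfrak p^{-1}(U)$. The missing generators are precisely the meridians of the components of $C$ through $q$ (equivalently, the base loops around $\mathfrak p(D^f)$), and these \emph{are} captured by the neighborhood of the exceptional section $E_q\subset\widetilde V$: the image of $N(E_q)$ in $\P^2$ is a neighborhood of $q$, whose intersection with $\C^2$ is a conical region around the direction of $V$ at infinity and therefore meets every affine line parallel to $V$, in particular every component of $C$ in $D^f$, and contains meridians of each. Turning this into a proof requires an extra step --- e.g.\ observe that $E_q$ is a section of the ruling, so $N(E_q)\setminus D$ surjects onto $\pi_1\bigl(\P^1\setminus\mathfrak p(D^f)\bigr)$ and supplies the base factor, while $F_u\setminus D\subset N(V')\setminus D$ supplies the fiber factor, and together they generate. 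For comparison, the paper works in $\P^1\times\P^1$, adjoins the two lines at infinity to $\overline C$, and blows up the points of $\overline V\cap\overline C$ so that the whole total transform $\widetilde V$ becomes a (reducible) fiber of the induced fibration of $\widetilde{\P^1\times\P^1}$ over $\P^1$, to which Lemma~\ref{surj-lem} is applied; in the application in Section~\ref{connectedness-sec} the curve $\sL\cup\sZ$ has no components parallel to the diagonal, so the fiber-component issue does not arise there. As written, though, your step ``Zariski--van Kampen applies exactly as in Lemma~\ref{surj-lem} after discarding $D^f$'' is a genuine gap: you must either add the hypothesis that $C$ has no components parallel to $V$, or account for the surviving base loops via $N(E_q)$.
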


\begin{proof} Let $\mathfrak p : \C^2 \rightarrow \C$ be a linear projection so that $V$ is a fiber.   Then
we can define completions $\C^2 \subset \P^1 \times \P^1$ and $\C \subset \P^1$, so that $\mathfrak p$
extends to a projection
$$
\overline {\mathfrak p}: \P^1 \times\P^1 \rightarrow \P^1
$$
so that the closure $\overline V$ of $V$ in $\P^1 \times \P^1$ is a fiber.
Let $\overline  C$ be the union of the closure of $C$ in $\P^1 \times \P^1$, and the two lines in $\P^1 \times \P^1 \setminus \C^2$.

Let $\sigma : \widetilde {\P^1 \times \P^1} \rightarrow \P^1$ be sequence of blowups over points of $\overline C \cap \overline V$
such that the total transform $\widetilde V = \sigma^{-1}(\overline V)$ and the proper transform $\widehat C$ over $\overline C$
meet in normal crossings.
Let $N(\widetilde V)$ be a regular neighborhood of $\widetilde V$ as in Lemma~\ref{surj-lem}.  Then
 the homomorphism 
$$
\pi_1(N(\widetilde V) \setminus \widehat C) \rightarrow \pi_1(\widetilde {\P^1 \times \P^1} \setminus \widehat C)
$$
induced by inclusion is surjective.

Since $\sigma$ is an isomorphism outside the preimage of $\overline C \cap \overline V$,
we have a commutative diagram
 $$
 \xymatrix{
\widetilde V\ar[d] \ar[r] &N(\widetilde V) \ar[d]\\
\overline V \ar[r] &N(\overline V)
}
$$
where the vertical arrows are quotient maps that contract the exceptional curves (or 1-dimensional fibers) over points on $\overline V \cap \overline C$.
It follows that the retraction of $N(\widetilde V) \setminus \widehat C$ to $\widetilde V \setminus \widehat C$ descends to a retraction
of $N(\overline V) \setminus C$ to $\overline V \setminus C$, and hence $N(\overline V) \setminus \overline C$ is a regular neighborhood of $\overline V \setminus \overline C$.
Finally, $P^1 \times \P^1 \setminus \overline C = \C^2 \setminus C$, so setting $N(V) = N(\overline V) \cap \C^2$, it follows 
that $N(V)$ is a regular neighborhood of $V$ and 
the homomorphism
$$
\pi_1(N(V) \setminus C) \rightarrow \pi_1(\C^2 \setminus C)
$$
defined by inclusion is surjective.
\end{proof}

Corollary~\ref{connectedness-cor} is used in our discussion of connectivity of augmented deformation space  in Section~\ref{connectedness-sec}.

\begin{remark}\label{Hopf-rem}
The boundary manifold $S(\widetilde V)$ associated to $N(\widetilde V)$ in the previous proof 
 has the structure of a  boundary manifold over the incidence
graph of the irreducible components of $\widetilde V$ (cf. Remark~\ref{graphmanifold-rem}).  Furthermore, 
each component of $\widetilde V$ is isomorphic to a line,
each vertex manifold is an $S^1$ fiber bundle over $S^2$ with a finite set of thickened fibers removed.
\end{remark}

\section{Application to the Main  Example}\label{proof-sec}

Let $F \in \Per_4(0)^*$, and let 
 $f,\iota : (S^2,A) \rightarrow (S^2,B)$ be the underlying branched covering and identification
 of domain and range so that
$A$ is the periodic 4 cycle, and $B = A \cup \{v\}$ where $v$ is the extra critical point.
We study the inclusion $\Def_{f,\iota} \subset \AugDef_{f,\iota}$ by looking at their images $\V_{f,\iota}$ and $\AugV_{f,\iota}$ in 
$\Moduli_{f}$ and $\AugModuli_{f}$.

\subsection{Parameterization of moduli space}\label{parameterization-sec}
We begin by embedding $\Moduli_f$ in $\P^1 \times \P^1$ as follows.
Consider an element of $\Moduli_f$ represented as a commutative diagram
$$
\xymatrix{
A\ \ar[d]_{f |_A} \ar@{^(->}[r]^i &\P^1\ar[d]^F\\
B\  \ar@{^(->}[r]^j &\P^1.
}
$$
By applying automorphisms of $\P^1$ on the right side, we can assume that
$$
i(B) = \{0,1,\infty, y,z\} \quad j(A) = \{0,1,\infty,x\},
$$
where 
\begin{enumerate}[(i)]
\item $\infty$ and $z$ are the critical values of $f$;
\item   $0$ is a critical point in $A$ with $f(0) = \infty$; and
\item $f(\infty) = 1$, $f(x) = 0$.  
\end{enumerate}

The above data completely determines $F : \P^1 \times \P^1$ as a rational function in the variable $t$:
$$
F(t) = \frac{(t-x)(t-r)}{t^2} \qquad r = \frac{x + y - 1}{x -1}.
$$
It follows that in this example $z$ is determined by $x$ and $y$:
\begin{eqnarray}\label{z-eqn}
z = - \frac{(1 - 2x + x^2 - y)^2}{4 x (x-1)(x+y-1)}.
\end{eqnarray}
We have the following (see also, \cite{HK:rational}).

\begin{lemma} \label{par-lem} There is an identification 
$$
\Moduli_f  =   \P^1 \times \P^1 \setminus \sL \cup \sZ,
$$
assigning $(i,j,F)$ to $(x,y)$,
 where
$$
\sL = \{x = 0\} \cup \{y = 0\} \cup \{x = 1\} \cup \{y = 1\} \cup \{x =  \infty\} \cup \{y = \infty\}
$$
and $\sZ$ is the closure in $\P^1 \times \P^1$ of the affine union of curves
$$
\{1-2x + x^2 - y =0 \}\cup  \{x^2+y = 1\}  \cup \{ x + y = 1\} \cup \{2xy+x^2-y-2x+1=0\}
$$
\end{lemma}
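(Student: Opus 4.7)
The plan is to construct an explicit bijection $\Moduli_f \leftrightarrow \P^1\times\P^1\setminus(\sL\cup\sZ)$ by exhausting the M\"obius freedom on both copies of $\P^1$ to reduce each representative $(i,j,F)$ to a unique normal form with two free parameters $x$ and $y$. Given $(i,j,F)$, I would use the three source-side degrees of freedom to place $j(A)=\{0,1,\infty,x\}$ with $0$ the critical point of $F$ lying in the cycle, $\infty$ the preimage of the target point $1$, and $x$ the preimage of the target point $0$, realizing the combinatorial 4-cycle $0\to\infty\to 1\to x\to 0$. Symmetrically, I would use the three target-side degrees of freedom to send $F(0), F(\infty), F(x)$ to $\infty, 1, 0$; the two remaining points of $i(B)$, namely $\iota(x)$ and the second critical value, are then pinned at unique coordinates $y$ and $z$. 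Since all M\"obius freedom has been expended, the triple $(x,y,z)\in\C^3$ is a canonical invariant of the class.

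Next, I would derive the explicit formulas. A pole of order two at $t=0$ with $F(\infty)=1$ forces $F(t)=(t-x)(t-r)/t^2$ for a constant $r$, and $F(1)=y$ pins $r=(x+y-1)/(x-1)$. A brief computation of $F'(t)=0$ locates the second critical point at $v=2xr/(x+r)$, and substitution yields the displayed formula~(\ref{z-eqn}) for $z=F(v)$ in terms of $(x,y)$. The same formulas also construct an inverse map $(x,y)\mapsto(i,j,F)$, so the class is determined by $(x,y)$ alone and the projection $\Moduli_f\to\P^1\times\P^1$ is injective.

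Finally, I would cut out the image by imposing distinctness of the marked points. The four points of $j(A)$ being distinct gives $x\notin\{0,1,\infty\}$, and the five points of $i(B)$ being distinct gives $y\notin\{0,1,\infty\}$ together with $z\notin\{0,1,\infty,y\}$. The first six conditions cut out the six lines of $\sL$; the remaining four must be identified with the four curves defining $\sZ$. The cases $z=0$ and $z=\infty$ are immediate from the numerator and denominator of~(\ref{z-eqn}), giving $1-2x+x^2-y=0$ and $x+y=1$ respectively. The cases $z=1$ and $z=y$, which should give $x^2+y=1$ and $2xy+x^2-y-2x+1=0$, form the main computational step and require a short factoring argument after clearing denominators. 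A useful geometric sanity check, which would guide the calculation, is that these four curves are precisely the loci where the second critical point $v=2xr/(x+r)$ coincides with $x$, $0$, $\infty$, $1\in j(A)$ respectively, explaining $\sZ$ as the locus where the second critical point collapses into the first critical 4-cycle.
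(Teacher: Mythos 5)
Your proposal is correct and follows essentially the same route as the paper, which likewise normalizes three points on each side by M\"obius transformations to get the normal form $F(t)=(t-x)(t-r)/t^2$ with $r=(x+y-1)/(x-1)$ and then reads off $\sL\cup\sZ$ as the degeneracy locus of the marked points; the factorizations you defer do work out ($z=1$ reduces to $(x^2+y-1)^2=0$ and $z=y$ to $(2xy+x^2-2x+1-y)^2=0$), and your observation that $\sZ$ is exactly where the free critical point $v=2xr/(x+r)$ lands on $\{x,\infty,0,1\}$ is a clean justification the paper leaves implicit.
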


\begin{figure}[htbp] 
   \centering
   \includegraphics[width=1.5in]{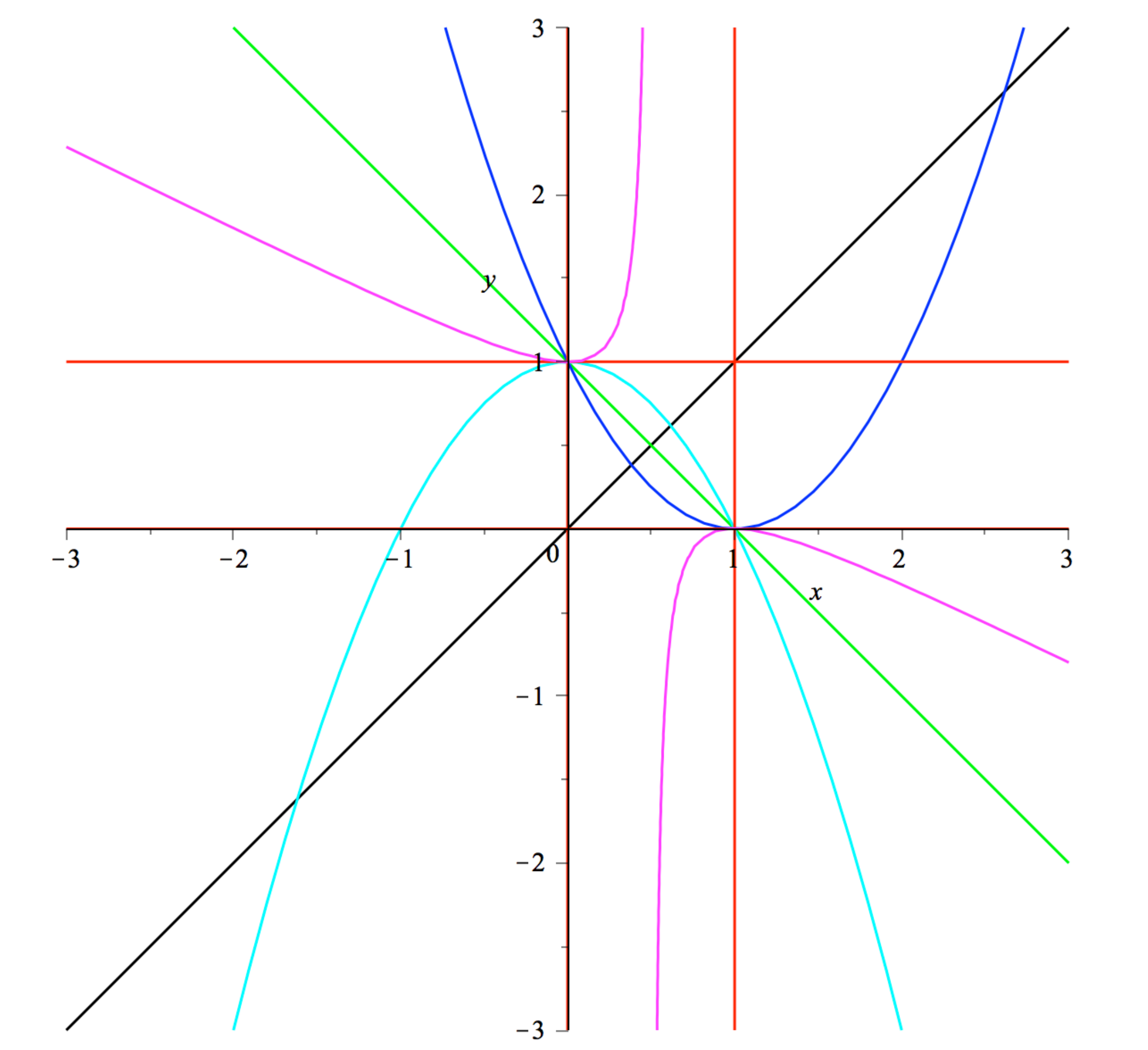} 
   \caption{Picture of $\sL \cup \sZ$ in the affine plane. The lines in $\sL$ are drawn in red.}
   \label{affine-fig}
\end{figure}

By this parameterization, the image $\V$ of $\Def_{f}$ in $\Moduli_{f}$ equals the diagonal
$$
\V = \{(x,y) \in \P^1 \times \P^1 \setminus \sL \cup \sZ \ | \ x = y\}.
$$
Figure~\ref{affine-fig} gives a picture\footnote{This figure was provided courtesy of Sarah Koch.} of the real part
of $\sL \cup \sZ$ in the affine open subset $\C^2 \subset \P^1 \times \P^1$, and Figure~\ref{ClosedW-fig} gives a picture near $(\infty,\infty)$.

\begin{figure}[htbp] 
   \centering
   \includegraphics[width=1in]{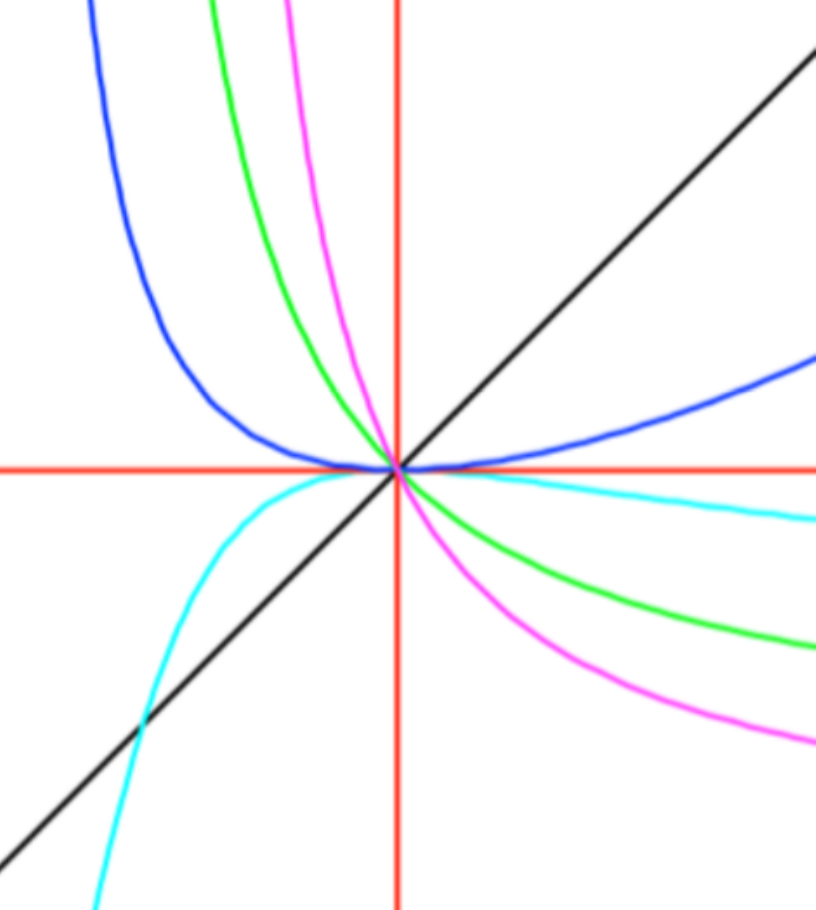} 
   \caption{Picture of $\sL \cup \sZ$ in $\P^1 \times \P^1$ near $(\infty,\infty)$.  The line $x = y$ is drawn in black, and the lines $x = \infty$ and
   $y = \infty$ are drawn in red.}
   \label{ClosedW-fig}
\end{figure}

\subsection{The quotient of augmented deformation space}

Let $\AugModuli_f$ be the quotient of $\AugTeich_f$ by the action of $L_f$,  and let $\AugV_{f,\iota}$ be the image of $\AugDef_{f,\iota}$
in $\AugModuli_f$.  Our goal in this section is to concretely describe a subspace $X \subset \AugV_{f,\iota}$ satisfying the properties in Proposition~\ref{suff-prop}.

First we recall that the elements of $\AugV_{f,\iota}$ are the elements of $\AugModuli_f$ that equalize the two maps
$$
p_L, p_U : \AugModuli_f \rightarrow \AugModuli_{(S^2,A)}.
$$
Each stratum of $\AugModuli_f$ is described 
by a partition of $\{0,1,\infty,y,z\}$ into two or three sets by an admissible multi-curve $\gamma$, as in Figure~\ref{partition-fig}.
Here, the empty multi-curve corresponds to the principal stratum $\Moduli_f \subset \AugModuli_f$.
\begin{figure}[htbp] 
   \centering
   \includegraphics[width=3.5in]{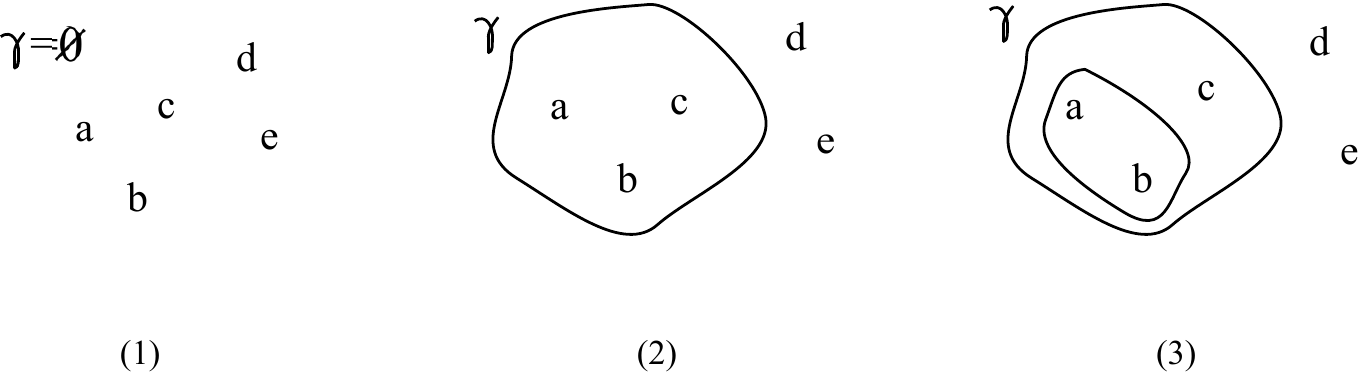} 
   \caption{Possible partitions of five points by an admissible multi-curve.}
   \label{partition-fig}
\end{figure}
The corresponding stable curves are shown in Figure \ref{stable5-fig}, with each $\P^1$, homeomorphic to $S^2$, is drawn as a line.
\begin{figure}[htbp] 
   \centering
   \includegraphics[width=4in]{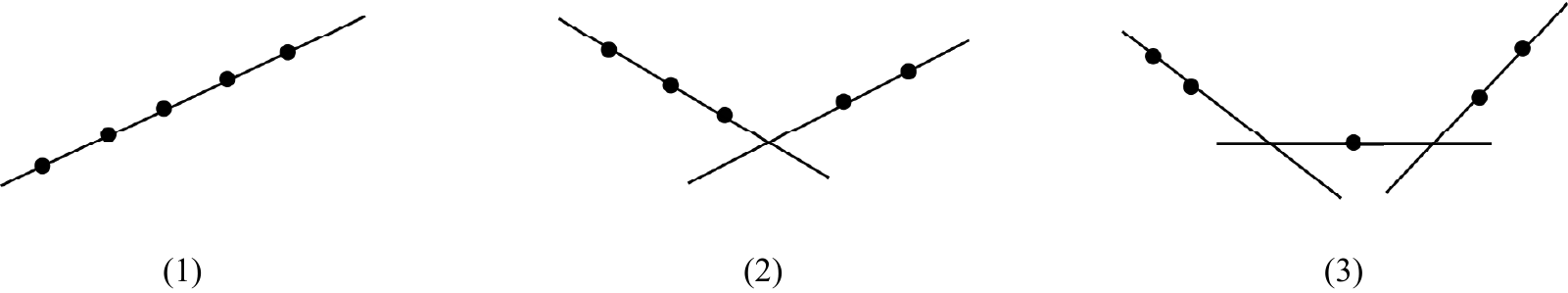} 
   \caption{The three types of stable curves for $(S^2,B)$ where $B$ has five elements.
   The two left define one-dimensional strata, and the right defines a point stratum.}
   \label{stable5-fig}
\end{figure}

For $\AugModuli_{(S^2,A)}$ there are only two isomorphism types (shown in Figure~\ref{stable4-fig}).  The left picture depicts points 
belonging
to the main component
$\Moduli_{(S^2,A)}\subset\AugModuli_{(S^2,A)}$,  which is isomorphic to a thrice punctured sphere, while 
the right picture depicts one of the three single point boundary points.

\begin{figure}[htbp] 
   \centering
   \includegraphics[width=2.5in]{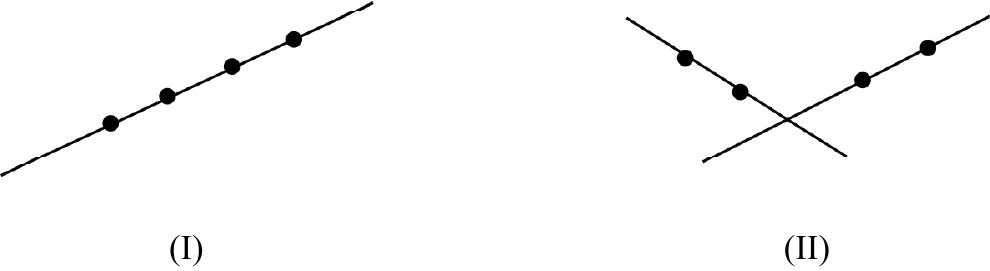} 
   \caption{The two topological homeomorphism types of stable curves for $(S^2,A)$ where $A$ has four elements.
}
   \label{stable4-fig}
\end{figure}

The elements $\alpha \in \AugModuli_f$ that lie in positive dimensional strata must be of the form (1) or (2)
in Figure~\ref{stable5-fig}.  Those of type (1) lie in $\Moduli_f = \P^1 \times \P^1 \setminus \sL \cup \sZ$
and map under both $p_U$ and $p_L$  to elements of $\AugModuli_{(S^2,A)}$ of type (I).
Those of type (2) divide into four subtypes: those that correspond to a partition of the form
\begin{enumerate}
\item [(2a)] $\{a,b,z\}\cup\{c,\infty\}$; 
\item [(2b)] $\{a,\infty,z\}\cup\{b,c\}$;
\item [(2c)] $\{a,b,c\} \cup \{\infty,z\}$; or
\item [(2d)] $\{a,b,\infty\}\cup\{c,z\}$.
\end{enumerate}

For types (2a) and (2b), $p_L$ maps $\alpha$ to an element in $\AugModuli_{(S^2,A)}$ of type (II),
while for types (2c) and (2d), $p_L$ maps $\alpha$ to one of type (I).
For types (2a) and (2d) $p_U$ maps $\alpha$ to an element of type (II), while for type
(2b) and (2c) $\alpha$ could apriori map to an element of type (I) or (II).  This is because the critical
values $\infty$ and $z$ lie in the same component.  Thus the rational map $F : \sC_U \rightarrow \sC_L$ must have two isomorphic
irreducible components in $\sC_U$ lying over the unramified component in $\sC_L$ upon which the distinguished points will be distributed.

From this we can reduce the types that can be in $\AugV$ to (2a), (2d), as well as 
 (2b) and (2c) under the condition that
 the  preimage under $F$ of the distinguished points in the unramified component lie on the same component of $\sC_U$.
 In the allowable case of types (2a) and (2b), we see that for $p_L(\alpha)$ and $p_U(\alpha)$ to be equal the images of the two maps
 in $\AugModuli_{(S^2,A)}$ must give the partition
 $$
 \{0,1\} \cup \{x,\infty\}.
 $$
 Thus, the partition given in (2a) can only be
  $$
 \{0,1,z\} \cup \{y,\infty\}
 $$
 and for (2b) it can only be
 $$
 \{\infty, y, z\} \cup \{0,1\}.
 $$
 For type (2c), $p_L(\alpha)$ and $p_U(\alpha)$ must be equal, and $F$ defines the isomorphism on stable curves.
Taking into account the combinatorics of $f$, this implies equality of the cross ratios:
 $(0,\infty;1,x)$ and $(\infty,1; x,0)$, which is false under the assumption that $x \notin \{0,1,\infty\}$. 
 
 Let $\sA_1, \sA_2 \subset \AugModuli_f$ be the subsets corresponding to the partitions
  $$
\sA_1:  \{\infty, y,z\} \cup \{0,1\},
 $$
  and 
 $$
\sA_2:  \{0,1,z\} \cup \{y,\infty\}.
 $$
 Then each of these is isomorphic to $\Mod_{0,4} \times \Mod_{(0,3)}$.
 Furthermore, the closures of $\sA_1$ and $\sA_2$ in $\AugModuli_f$ intersect at the point corresponding to the partition
 $$
 \{0,1\} \cup \{z\} \cup \{y,\infty\}.
 $$

We have shown the following.

\begin{proposition} \label{connectedness-prop}
The pure 1-dimensional algebraic set $\V \cup \sA_1 \cup \sA_2 \subset \AugModuli_f$ is contained in $\AugV_{f,\iota}$, and its
complement is a finite set of points (possibly empty).
\end{proposition}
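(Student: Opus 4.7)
The plan is to analyze $\AugV_{f,\iota} = \Eq(\overline p_L, \overline p_U)$ by stratifying $\AugModuli_f$ according to the partition type of the nodal curve $\sC_L$ (as in Figure~\ref{partition-fig}) and, on each positive-dimensional stratum, determining when $\overline p_L$ and $\overline p_U$ agree. The 2-dimensional stratum is $\Moduli_f$ itself; the 1-dimensional strata correspond to type-(2) partitions; everything else is 0-dimensional and hence contributes at most finitely many points.

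First I would verify the inclusion $\V \cup \sA_1 \cup \sA_2 \subset \AugV_{f,\iota}$. For $\V$ this is tautological, since $\V$ is the image of $\Def_{f,\iota} \subset \AugDef_{f,\iota}$ in $\Moduli_f \subset \AugModuli_f$. For $\sA_1$ and $\sA_2$ I would exhibit, for each point, an admissible rational map $F:\sC_U \to \sC_L$ realizing it and check directly that both $\overline p_U$ and $\overline p_L$ land on the same type-(II) point of $\AugModuli_{(S^2,A)}$, namely the one given by the partition $\{0,1\} \cup \{x,\infty\}$ predicted by $f|_A$ and $\iota|_A$.

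Second, I would show that any positive-dimensional stratum of $\AugV_{f,\iota}$ outside $\Moduli_f$ lies in $\sA_1 \cup \sA_2$, by running through cases (2a)--(2d). Types (2a) and (2d) map via $\overline p_U$ to type-(II) points, and matching $\overline p_L$ to $\overline p_U$ forces case (2a) into the partition defining $\sA_2$, while the combinatorics of $f|_A$ rule out (2d). Types (2b) and (2c), where the two critical values $\infty$ and $z$ lie on the same component of $\sC_L$, require the admissible cover $F$ to have two isomorphic components over the unramified component of $\sC_L$; the distribution of preimages of the four distinguished points then forces (2b) into the partition defining $\sA_1$ and reduces (2c) to the cross-ratio identity $(0,\infty;1,x)=(\infty,1;x,0)$, which has no solution with $x \notin \{0,1,\infty\}$.

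The main obstacle will be precisely this analysis of cases (2b) and (2c): because the two critical values share a component of $\sC_L$, one must track carefully how the admissibility condition on $F$ constrains where preimages of the distinguished points can sit on $\sC_U$, and then translate this into an honest identification of $\overline p_L$ with $\overline p_U$ modulo the automorphisms in $\AugModuli_{(S^2,A)}$. Once the combinatorial bookkeeping and the cross-ratio computation for (2c) are settled, the remaining steps are a routine stratum count, and the finiteness statement for the complement follows because the 0-dimensional strata of $\AugModuli_f$ are themselves finite in number.
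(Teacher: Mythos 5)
Your proposal is correct and follows essentially the same route as the paper: stratify $\AugModuli_f$ by the partition type of $\sC_L$, run the case analysis (2a)--(2d) using the positions of the critical values $\infty$ and $z$ to match types (I)/(II) under $\overline p_L$ and $\overline p_U$, eliminate (2c) via the cross-ratio identity $(0,\infty;1,x)=(\infty,1;x,0)$, and get finiteness of the complement from the finiteness of the zero-dimensional strata. No substantive differences to report.
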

 
 \begin{remark} We leave the question of whether $\AugV_{f,\iota}$ is connected (in this case, and in general) to future study. \end{remark}

\subsection{Blowups}\label{blowup-sec}

In this section, we find a connected pure 1-dimensional algebraic subset $X \subset \AugV$
that contains $\V$, and whose complement in $\AugV$ is finite.

Let
$$
\sigma : \widetilde {\P^1 \times \P^1} \rightarrow \P^1 \times \P^1
$$
be sequence of blowups defined as follows.
First blowup the points $(0,0)$, $(1,1)$ and $(\infty,\infty)$ to get the exceptional curves $E_0,E_1,E_\infty$.
Next blowup the point of intersection $q \in E_\infty \cap \widehat L_y$, where
$\widehat L_y$ is the proper transform of $\{y=\infty\}$. Let $E_q$ be the
exceptional divisor.   The union of curves is drawn in Figure~\ref{blowup2-fig} (compare Figure~\ref{affine-fig}).

\begin{figure}[htbp] 
   \centering
   \includegraphics[width=2.5in]{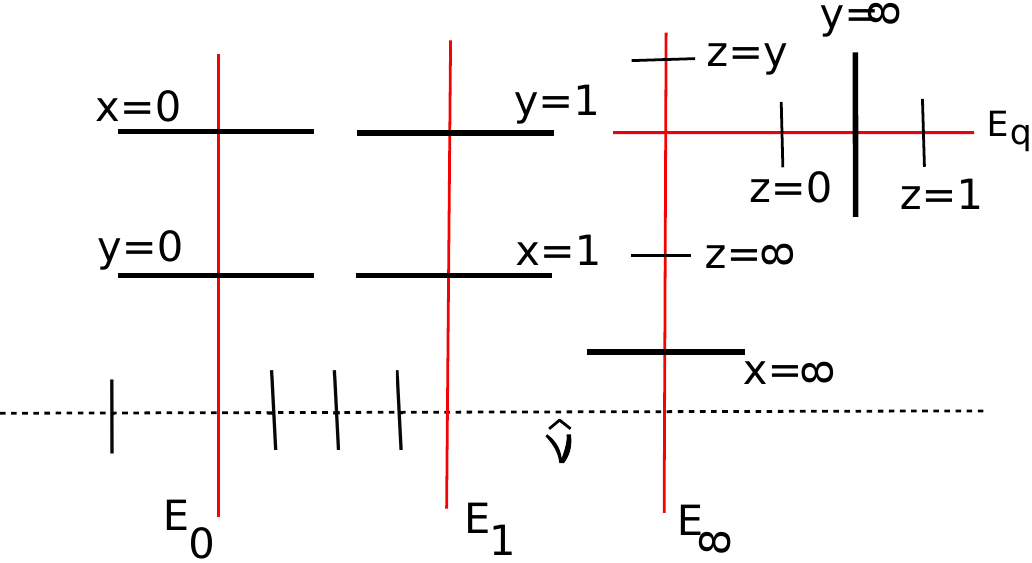} 
   \caption{The proper transform of $\sV$ is drawn as a dotted line, and the exceptional curves $E_0,E_1,E_\infty$ and $E_q$
  are drawn in red. Intersections with $\sL$ are indicated as thickened black line, and intersections with $\sZ$ are indicated with
   a thin black line}
   \label{blowup2-fig}
\end{figure}

\begin{lemma}  The map $\sigma$ has the following properties:
\begin{enumerate}
\item $\sigma$ restricts to an isomorphism on $\widetilde {\P^1 \times \P^1} \setminus \sigma^{-1}(Q)$; and
\item the total transform $\widetilde \V$ and the proper transform $\widehat {\sL \cup \sZ}$ meet in normal crossing
singularities.
\item inclusion induces a surjection on fundamental groups
$$
\pi_1(\sN(\widetilde \V) \cap \Moduli_f) \rightarrow \pi_1(\Moduli_f).
$$
\end{enumerate}
\end{lemma}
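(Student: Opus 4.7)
The plan is to address the three claims in order. Claim (1) is immediate from the construction: $\sigma$ is the composition of four point blowups over $Q=\{(0,0),(1,1),(\infty,\infty),q\}$, and each blowup is an isomorphism off the point being blown up, so $\sigma$ is an isomorphism on the complement of $\sigma^{-1}(Q)$.

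For (2), the possible non--normal-crossing singularities of $\widetilde\V\cup\widehat{\sL\cup\sZ}$ lie over intersection points of the diagonal $\overline\V$ with components of $\sL\cup\sZ$. Away from $Q$, a direct affine computation locates the intersections of $\overline\V$ with each component of $\sZ$ (e.g., $1-3x+x^2=0$ for $\{1-2x+x^2-y=0\}$, and similarly for the other three components) and checks that each is a simple transverse crossing with distinct branches; these contribute only normal crossings. The verification then reduces to the four blown-up points. At $(0,0)$ and $(1,1)$, only two lines of $\sL$ and the diagonal pass through, with three distinct tangent directions, and no component of $\sZ$ is present, so a single blowup separates them and yields normal crossings along $E_0$ and $E_1$. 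The main case is $(\infty,\infty)$: in the local chart $u=1/x,\ v=1/y$, one computes that in addition to the three transverse branches $\{u=0\},\{v=0\},\{u=v\}$, the $\sZ$-components $\{x+y=1\}$ and $\{2xy+x^2-y-2x+1=0\}$ pass through transversely with tangent directions $u+v=0$ and $2u+v=0$, while the remaining $\sZ$-components $\{1-2x+x^2-y=0\}$ and $\{x^2+y=1\}$ are both tangent to $\widehat L_y=\{v=0\}$. The blowup at $(\infty,\infty)$ therefore separates five of the seven branches but leaves $E_\infty$, $\widehat L_y$, and these two tangent $\sZ$-components meeting at the common point $q$. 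A second tangent-cone computation in the chart $u=s',\ v=s't'$ shows that at $q$ the four relevant branches have pairwise distinct tangent directions $[0{:}1],[1{:}0],[1{:}1],[1{:}-1]$, so the blowup at $q$ produces four distinct intersections with $E_q$, completing the normal-crossing check.

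For (3), once (1) and (2) are in hand we are exactly in the setup of Corollary~\ref{connectedness-cor}, applied with $V=\V$ (a line in $\C^2\subset\P^1\times\P^1$) and $C=\sL\cup\sZ$: our $\sigma$ is an isomorphism over $\C^2\setminus C=\Moduli_f$, and $\widetilde\V$ meets $\widehat{\sL\cup\sZ}$ only in normal crossings. The regular neighborhood $\sN(\widetilde\V)$ constructed via Lemma~\ref{surj-lem} (applied to the linear projection whose generic fibers are parallel to $\V$, after passing to a compactification on which it extends) then yields the surjection $\pi_1(\sN(\widetilde\V)\cap\Moduli_f)\to\pi_1(\Moduli_f)$. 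The hard part of the whole argument is the tangent-cone bookkeeping at $q$: two components of $\sZ$ coincide to first order with $\widehat L_y$ at $(\infty,\infty)$, a phenomenon invisible from the $\sL$-structure alone, and this is precisely what forces the extra blowup at $q$ beyond the three obvious blowups on the diagonal of $\sL$.
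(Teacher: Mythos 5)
Your argument is correct and follows the same route the paper intends (the paper's own proof is essentially a one-line appeal to "the definitions" and Lemma~\ref{surj-lem}); you simply supply the local verification it omits, and your tangent-cone bookkeeping at $(\infty,\infty)$ and at $q$ --- the two conic components of $\sZ$ tangent to $\{y=\infty\}$, with directions $[1{:}1]$ and $[1{:}-1]$ separating after the second blowup --- checks out. The one point where your write-up is actually more careful than the source is part (3): since the total transform $\widetilde\V$ has self-intersection $2$ it is not a fiber of any fibration of $\widetilde{\P^1\times\P^1}$, so Lemma~\ref{surj-lem} cannot be applied verbatim and one must route through Corollary~\ref{connectedness-cor} with the projection $(x,y)\mapsto x-y$ and its adapted compactification, exactly as you do.
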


\begin{proof}  Properties (1) and (2) follow from the definitions, and property (3) follows from Lemma~\ref{surj-lem}.
\end{proof}

Let 
$$
X = (\widehat \V \cup E_\infty \cup E_q) \setminus \widehat {\sL \cup \sZ}.
$$
Then $X$ is connected since  the punctures of $\widetilde \V$ at intersections with $\widehat {\sL \cup \sZ}$ occur
only in smooth points of $\widetilde \V$.  Since $\sigma$ is an isomorphism over $\V$, there is a natural inclusion
$\nu: \V \hookrightarrow X$.

\begin{lemma} The inclusion $\V \hookrightarrow \AugV$ induced by $\Def_{f,\iota} \hookrightarrow \AugDef_{f,\iota}$ factors  as
$\xi \circ \nu$ for some embedding 
$$
\xi: X \hookrightarrow \AugV.
$$
\end{lemma}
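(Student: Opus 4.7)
The plan is to construct $\xi$ piece by piece on the three natural components of $X$: the open dense subset $\widehat{\V}\setminus (E_\infty\cup E_q\cup \widehat{\sL\cup\sZ})$, the exceptional divisor $E_\infty$ minus its intersections with $\widehat{\sL\cup\sZ}$, and analogously $E_q$. On the first piece, $\sigma$ restricts to an isomorphism onto its image in $\V$, so there is essentially no choice: $\xi$ must equal the composition $(\V\hookrightarrow \AugV)\circ \sigma|_{\widehat{\V}}$, and this forces $\xi\circ\nu$ to agree with the given inclusion $\V\hookrightarrow \AugV$ identically.

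The substantive content is to extend $\xi$ to the exceptional pieces and to the finitely many points of $\widehat{\V}\cap(E_\infty\cup E_q)$. To this end I would perform an asymptotic analysis of the rational map $F$ in local coordinates $(u,v)=(1/x,1/y)$ near $(\infty,\infty)$, using the explicit formula (\ref{z-eqn}) to track how the configuration $(0,1,\infty,y,z)$ degenerates. With $s=v/u$ parametrizing $E_\infty$, a generic value $s\ne 0,\infty$ corresponds to approaching $(\infty,\infty)$ along a direction in which $x,y,z$ all blow up at comparable rates, so in the Deligne--Mumford-type limit the partition of $\{0,1,\infty,y,z\}$ is precisely $\{0,1\}\cup\{\infty,y,z\}$, which defines a point of $\sA_1$. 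Tracking the surviving cross-ratios identifies $E_\infty\setminus \widehat{\sL\cup\sZ}$ with an algebraic subset of $\sA_1\cong \Moduli_{0,4}\times \Moduli_{0,3}$ injectively.

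The point $q=E_\infty\cap \widehat L_y$ corresponds to $s=\infty$, i.e.\ $y$ blows up strictly faster than $x$. A direct computation using (\ref{z-eqn}) shows that along this direction $z$ stays bounded away from $y$, so the limiting stable curve is instead of type $\sA_2$, with partition $\{0,1,z\}\cup\{y,\infty\}$; resolving the one-parameter family of such limits is exactly what the further blowup $E_q$ accomplishes. An analogous local-coordinate computation on $E_q$ maps $E_q\setminus \widehat{\sL\cup\sZ}$ injectively into $\sA_2$. The isolated point of $\widehat{\V}\cap E_\infty$ is the image of the tangent direction of the diagonal at $(\infty,\infty)$, which determines a unique value of $s$ and hence a specific point of $\sA_1$.

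What remains is continuity at the finite gluing locus and global injectivity of $\xi$. Continuity holds because the stable-curve limits extracted from any sequence in $\Moduli_f$ accumulating on a boundary point of $X$ agree with the limits assigned above, by the universal property of $\AugModuli_f$. Injectivity follows piecewise, because the images of $\widehat{\V}\setminus \widehat{\sL\cup\sZ}$, $E_\infty\setminus \widehat{\sL\cup\sZ}$ and $E_q\setminus \widehat{\sL\cup\sZ}$ lie respectively in the disjoint strata $\V$, $\sA_1$, $\sA_2$, and injectivity within each piece follows from the explicit parametrizations. The main obstacle I expect is the bookkeeping in the asymptotic analysis on $E_\infty$ and especially on $E_q$, where one must both confirm that no further blowups are needed to separate boundary strata and verify that the induced map to $\sA_1\cup\sA_2$ respects the analytic structure, so that $\xi$ is an embedding rather than merely a continuous injection.
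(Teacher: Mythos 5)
Your proposal follows essentially the same route as the paper: restrict $\xi$ to the tautological map on $\widehat{\V}$, then use the explicit formula (\ref{z-eqn}) in local coordinates near $(\infty,\infty)$ to compute the limiting stable configurations of $\{0,1,\infty,y,z\}$ along $E_\infty$ and $E_q$, identifying these punctured exceptional curves with subsets of $\sA_1$ and $\sA_2$ respectively and checking that the degenerate cross-ratios occur exactly at the intersections with $\widehat{\sL\cup\sZ}$. One minor bookkeeping slip: with $s=v/u=x/y$ the point $q=E_\infty\cap\widehat{L}_y$ is $s=0$ rather than $s=\infty$, though your verbal description ($y$ blowing up faster than $x$, forcing the $\{0,1,z\}\cup\{y,\infty\}$ partition and the need for the second blowup) is the correct one and matches the paper's computation.
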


\begin{proof}   
Recall from Proposition~\ref{connectedness-prop} the subset $\V \cup \sA_1 \cup \sA_2 \subset \AugV$.
In what follows we define embeddings
\begin{eqnarray*}
\kappa_\infty:  E_\infty\setminus \widehat {\sL \cup \sZ} &\rightarrow& \sA_1\\
\kappa_q :  E_q \setminus \widehat {\sL \cup \sZ} &\rightarrow&\sA_2
\end{eqnarray*}
that together with the projection $\widehat V \rightarrow \V$ 
extend to define $\xi$.

When $\alpha \in \Moduli_f$ approaches a general point of $\sL \cup \sZ$, it corresponds to 
two points in $\{0,1,\infty,y,z\}$ coming together.  For the lines $(y=0$, $y=1$ and $y=\infty$, the pairs
$\{y,0\}$, $\{y,1\}$ and $\{y,\infty\}$ approach each other,
while simultaneously $\{x,1\}$, $\{1,\infty\}$ and $\{1,0\}$ approach each other.
Near the lines $x = 0$, $x=1$, $x=\infty$
the pairs $\{x,0\}$) (and $\{0,\infty\}$), $\{x,1\}$ (and $\{0,y\}$) and $\{x,\infty\}$ (and $\{0,1\}$) approach each other.
For $z$ approaching $0,1,\infty$ or $y$, we have
\begin{eqnarray*}
\{z,0\}  &\mbox{near}& 1 - 2x + x^2 - y = 0; x \neq 0, 1; x + y \neq 1\\
\{z,1\}  &\mbox{near}& x^2 + y = 1; x \neq 0; x+y \neq 1\\
\{z,\infty\} &\mbox{near}& x = 0, x=1, \mbox{or} \  x+y=1\\
\{z,y\}  &\mbox{near}& 2xy + x^2 - y - 2x =0
\end{eqnarray*}

We choose local coordinates for a neighborhood of $E_\infty$ as follows. Let $\overline x = 1/x$ and $\overline y = 1/y$ be coordinates for an
open neighborhood of $p_\infty = (\infty,\infty)$ in $\P^1 \times \P^1$, so that in the coordinates $(\overline x,\overline y)$ we
have $p_\infty = (0,0)$.  Let $u$ (and $\overline u = \frac{1}{u}$) be coordinates for $E_\infty$.  Then  a neighborhood of $E_\infty$
in $\widetilde {\P^1 \times \P^1}$ is isomorphic to the algebraic subset of $\C^2 \times \P^1$ defined by
$$
\{(\overline x,\overline y) \times u \in \C^2 \times \P^1\ | \overline x =   u \overline y\},
$$
and
$$
E_\infty = \{(\overline x, \overline y) \times u \in \C^2 \times \P^1\ | \ \overline x = \overline y = 0\}.
$$
With respect to the $\overline x, \overline y$ coordinates, we have
\begin{eqnarray*}
z &=& - \frac{\overline x^4 \overline y^2 (1 - 2x + x^2 - y)^2}{4 \overline x^4\overline y^2x (x-1)(x+y-1)}\\
&=& - \frac{(\overline x^2\overline y - 2\overline x\overline y + \overline y - \overline x^2)^2}
{4 \overline x\overline y(1 - \overline x)(\overline y + \overline x -\overline x \overline y)}\\
\end{eqnarray*}

Using the identity $\overline x = u \overline y$, we have
\begin{eqnarray*} 
z &=&- \frac{(u^2 \overline y^2 - 2 u\overline y + 1 - u^2 \overline y)^2}
{4 u \overline y(1-u\overline y)(1 + u (1 - \overline y))}.
\end{eqnarray*}
To each $(u, \overline y)$, associate the point in $\AugModuli_{(S^2,B)}$
defined by
$$
([u,\infty ; y, z] , *)
$$
where the first component is the cross ratio of the points $u,\infty, y,z$ and the second
component is the unique triple of points in $\P^1$ up to automorphism.   This defines a point in 
$\AugModuli_{(S^2,B)}$.  

Since cross ratio is preserved under automorphisms of $\P^1$ we have
$$
[u,\infty: y, z] = [u\overline y,\infty, 1,z \overline y].
$$
As $\overline y$ approaches $0$, the cross ratio approaches
$$
[0,\infty; 1, -\frac{1}{4 u(1+u)}] 
$$
and is degenerate only when  $u = 0$ ($x = \infty$),  $u = -1$ ($z = \infty$), $u = -\frac{1}{2}$ ($z = y$), or $u = \infty$ ($y = \infty$).

We thus have a well-defined embedding:
\begin{eqnarray*}
\kappa_\infty : E_\infty \setminus \widehat {\{\sL \cup \sZ\}} &\rightarrow& \sA_1\\
u &\mapsto& ([u, \infty; y,z], *).
\end{eqnarray*}
Here $u$ corresponds to a line through $(\infty,\infty)$ in $\P^1 \times \P^1$, and we can think of the contracting curve $\gamma$
as a small loop on this line around $(\infty,\infty)$.

The intersection of $E_\infty$ with $E_q$ occurs at the point on $E_\infty$ corresponding to $(\overline y,\overline u) = (0,0)$, 
and $E_q$ has
a neighborhood parameterized by $((\overline y, \overline u),v) \in \C^2 \times \P^1$ where $\overline y = v \overline u$.  
Then
$\overline x = u \overline y = v$ and
\begin{eqnarray*}
z
&=& -\frac {(1 -2x + x^2 - y)^2}{4 x(x-1)(x+y-1)}\\
&=& 
 - \frac{(\overline u(1 -2\overline v + \overline v^2) - \overline v)^2}{4 \overline v (\overline v-1)(\overline v + \overline u \overline v - 1)}\\
\end{eqnarray*}
and as $\overline u$ goes to $0$, 
$$
z = -\frac{\overline v}{4 (\overline v-1)^2} = -\frac{v}{4(1-v)^2}.
$$
Then we have
$$
[v,0;1,z] =  [1,0; \overline v, \overline v z] = [1,0; \overline v, -\frac{1}{4(1-v)^2}].
$$
The cross ratio is degenerate when $v = \infty$ (where
$\widehat {\{y=\infty\}}$ meets $E_q$), $ v = 1$ (where $E_\infty$ and $E_q$ intersect), 
and where $z = 0$ and $z = 1$ (corresponding to two distinct values of $v$ not equal to $1$ or $\infty$).
The points correspond to $v \in E_q \setminus \widehat {\sL \cup \sZ}$ except at the point
where $v = 1$.  

For this point, consider the 2-component multicurve $\gamma$ on $S^2 \setminus B$
determined by two loops on the planes defined by
$E_\infty$ and $\widehat{\{y=\infty\}}$ around $(\overline y, \overline u) = (0,0)$.  
The corresponding point $\alpha_0 \in \AugModuli_f$ corresponds to the partition
$$
\{y,\infty\} \cup \{z\} \cup \{0,1\}.
$$
Define
\begin{eqnarray*}
\kappa_q : E_q \setminus \widehat {\{\sL \cup \sZ\}} &\rightarrow& \sA_2\\
v &\mapsto& \left \{
\begin{array}{ll}
 ([0,1; \infty, \overline v], *) &\mbox{if $v \neq 1$}\\
 \alpha_0 &\mbox{if $v = 1$}
 \end{array}
 \right .
\end{eqnarray*}

Then $\kappa_\infty$ and $\kappa_q$ extend to a morphism $\xi : X \rightarrow \AugV$.
\end{proof}

\begin{proof}[Proof of Theorem~\ref{main-thm}]  Fix $d_0 \in \Def_{f,\iota}$, and let $\sD_0$ be the connected component of 
$\Def_{f,\iota}$ that contains $d_0$.  Let $\overline \sD_0$ be the closure of $\sD_0$ in $\AugDef_{f,\iota}$.  Then 
$\overline \sD_0$ maps to $\widehat \V$ under the projection from $\AugTeich_f \rightarrow \AugModuli_f$.
Let $\widehat \sD_0 \subset \AugTeich_f$  be the connected component of the preimage of $\widehat \V$ that intersects $\sD_0$.
Then, since $\widehat \V$ is closed in $X$ and 
$X$ is connected, we can find a connected component $Y_0$ of the preimage of $X$ in $\AugTeich_f$ that contains $\widehat D_0$,
and 
$\widehat \sD_0$ is the closure
in $Y_0$ of $\sD_0$.  That is, $\widehat \sD_0 = \overline \sD_0$.

We claim that there is an element 
$$
g \in S_{f,\iota} \cap \mbox{Image}( \pi_1(N(X) \cap \Moduli_f) \rightarrow \pi_1(\Moduli_f))
\setminus \mbox{Image}(\pi_1(\widehat V) \rightarrow \pi_1(\Moduli_f)).
$$
Let $v_0$ be the image of $d_0$ in $\V$, and let $\gamma$ be a close path starting at $v_0$ and passing along $X$ to 
a point near an intersection of $\widehat \sZ$ with $E_\infty \cup E_q$, forming a small loop around that intersection point,
and returning along the original path back to $v_0$.  Then $\gamma$ is not homotopic in $\AugModuli_f$ to an closed 
path on $\widehat V$ since $\widehat Z$ and $\widehat V$ do not intersect.
 Let $g$ be the image of $\gamma$ in $\pi_1(\Moduli_f)$ after pushing off the boundary of $\AugModuli_f$
into $\Moduli_f$.

Since $g \notin \mbox{Image}(\pi_1(\widehat V) \rightarrow \pi_1(\Moduli_f))$,  $g$ does not preserve
$\widehat D_0$, and hence $g(\widehat \sD_0)$ and $\widehat \sD_0$ are disjoint.  These are the closures of
$\sD_0$ and $g(\sD_0)$ in $\AugDef_{f,\iota}$ and the claim follows.
\end{proof}

\subsection{Connectivity of augmented deformation space}\label{connectedness-sec}

We finish this paper by giving a sufficient condition for $\AugDef_{f,iota}$ connected in the $\Per_4(0)^*$ case.

By Corollary~\ref{connectedness-cor}, we know that $\widetilde V$ has a  regular neighborhood $N$ in $\widetilde {\P^1 \times \P^1}$ so
that 
$$
\pi_1(N \cap \Moduli_f) \rightarrow \pi_1(\Moduli_f)
$$
is surjective. 

As a closure of $\Moduli_f$, $\widetilde {\P^1 \times \P^1}$ is birationally equivalent to $\AugModuli_f$ (but not isomorphic). By the
birational theory of 
complex projective surfaces, there is a minimal smooth surface $Z$ with birational morphisms to $\AugModuli_f$ and $\widetilde {\P^1 \times \P^1}$ 
$$
\xymatrix{
&Z\ar[dl]\ar[dr]\\
\AugModuli_f &&\widetilde {\P^1 \times \P^1}.
}
$$
Lifting $U$ to $Z$ and projecting to $\AugModuli_f$ gives a regular neighborhood $U'$ of $\overline  \V \cup \mathcal K$, where $\mathcal K$ is a set
of boundary curves in $\AugModuli_f$.   Since $\Moduli_f$ is a smooth subset of both $\AugModuli_f$ and $\widetilde {\P^1 \times \P^1}$ the
two projects of $Z$ are isomorphisms over $\Moduli_f$.  Thus
$$
U' \cap \Moduli_f = U \cap \Moduli_f,
$$
 and hence
$$
\pi_1(U' \cap \Moduli_f) \rightarrow \pi_1(\Moduli_f)
$$
is surjective.  It follows that $U'$ has a connected preimage in $\AugTeich_f$ an that the preimage of $\overline V \cup \mathcal K$ in $\AugTeich_f$
is connected.

We have thus shown the following sufficient condition for $\AugDef_{f,\iota}$ to be connected.

\begin{proposition} If $\AugV$ is connected and is Zariski dense in $\overline \V \cup \mathcal K$, then $\AugDef_{f,\iota}$ is connected.
\end{proposition}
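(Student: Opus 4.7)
The plan is to bootstrap Corollary~\ref{connectedness-cor} up from the rational surface $\widetilde{\P^1 \times \P^1}$ to the augmented moduli space $\AugModuli_f$, and then invoke a $\pi_1$-surjectivity argument in the spirit of Proposition~\ref{suff-prop}. First, I would apply Corollary~\ref{connectedness-cor} to the affine line $\V \subset \C^2 \subset \P^1 \times \P^1$ and the affine curve $C = \sL \cup \sZ$ that defines the complement of $\Moduli_f$. This produces a sequence of blowups $\sigma : \widetilde{\P^1 \times \P^1} \rightarrow \P^1 \times \P^1$ (which for concreteness I would take to be compatible with the blowups fixed in Section~\ref{blowup-sec}) and a regular neighborhood $N$ of the total transform $\widetilde \V$ such that the inclusion induces a surjection
$$
\pi_1(N \cap \Moduli_f, m_0) \twoheadrightarrow \pi_1(\Moduli_f, m_0).
$$

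Next, I would compare $\widetilde{\P^1 \times \P^1}$ to $\AugModuli_f$ as two smooth (or near-smooth) birational completions of $\Moduli_f$. Using the structure theorem for birational maps of smooth projective surfaces (resolution of indeterminacy by blowups), I would produce a common smooth resolution $Z$ fitting into a diamond with $Z \rightarrow \widetilde{\P^1 \times \P^1}$ and $Z \rightarrow \AugModuli_f$, both isomorphisms over the open subset $\Moduli_f$. Pulling $N$ back to $Z$ and pushing forward to $\AugModuli_f$ produces an open set $U' \subset \AugModuli_f$ containing $\overline \V \cup \mathcal K$ for some finite union $\mathcal K$ of boundary curves of $\AugModuli_f$, and satisfying $U' \cap \Moduli_f = N \cap \Moduli_f$. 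In particular, the composed inclusion gives a surjection $\pi_1(U' \cap \Moduli_f, m_0) \twoheadrightarrow \pi_1(\Moduli_f, m_0)$. It follows (since the quotient $\AugTeich_f \rightarrow \AugModuli_f$ restricts to a covering over $\Moduli_f$ with group $L_f$ and the image of $\pi_1(U' \cap \Moduli_f)$ contains $L_f \supset S_{f,\iota}$) that the preimage of $U'$ in $\AugTeich_f$ is connected, and therefore so is the preimage of its deformation retract $\overline \V \cup \mathcal K$.

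The final step is to import the hypothesis. Since $\AugV$ is by assumption Zariski dense in $\overline \V \cup \mathcal K$, its preimage in $\AugTeich_f$ is Zariski dense in the (connected) preimage of $\overline \V \cup \mathcal K$. The assumed connectedness of $\AugV$ together with density then forces the preimage of $\AugV$ in $\AugTeich_f$, which contains $\AugDef_{f,\iota}$ as an $S_{f,\iota}$-invariant subset, to be connected; since $S_{f,\iota}$ acts on $\AugDef_{f,\iota}$ with quotient $\AugV$, the connectedness of $\AugDef_{f,\iota}$ then follows.

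The main obstacle I anticipate is the comparison step between $\widetilde{\P^1 \times \P^1}$ and $\AugModuli_f$: while $\AugModuli_f$ is known to be a compactification of $\Moduli_f$ whose boundary divisor parameterizes degenerations, verifying that a common minimal resolution $Z$ exists requires that $\AugModuli_f$ be (sufficiently) smooth and projective along the relevant boundary curves, and that the identification $U' \cap \Moduli_f = N \cap \Moduli_f$ really holds after transport through $Z$. A secondary subtlety is checking that the boundary component set $\mathcal K$ produced by the transport actually contains the curves $\sA_1, \sA_2$ identified in Proposition~\ref{connectedness-prop}, so that the hypothesis on $\AugV$ has the intended meaning; this should follow from tracking the exceptional divisors in the explicit blowups of Section~\ref{blowup-sec} through the diamond.
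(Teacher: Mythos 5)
Your proposal follows essentially the same route as the paper: apply Corollary~\ref{connectedness-cor} to get the regular neighborhood $N$ of $\widetilde\V$ with $\pi_1$-surjectivity, resolve the birational map between $\widetilde{\P^1\times\P^1}$ and $\AugModuli_f$ through a common smooth surface $Z$ to transport $N$ to a regular neighborhood $U'$ of $\overline\V\cup\mathcal K$ with $U'\cap\Moduli_f=N\cap\Moduli_f$, deduce that the preimage of $\overline\V\cup\mathcal K$ in $\AugTeich_f$ is connected, and then invoke the density and connectedness hypotheses on $\AugV$. The paper leaves the final passage from ``connected preimage of $\overline\V\cup\mathcal K$'' to ``$\AugDef_{f,\iota}$ connected'' just as terse as you do, so your write-up matches it in both structure and level of detail.
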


\bibliographystyle{plain}
\bibliography{../../math}

\end{document}